\theoremstyle{plain}
\newtheorem{theorem}{Theorem}[section]
\newtheorem{lemma}[theorem]{Lemma}
\newtheorem{corollary}[theorem]{Corollary}
\newtheorem{proposition}[theorem]{Proposition}
\theoremstyle{definition}
\newtheorem{definition}[theorem]{Definition}
\newtheorem{remark}[theorem]{Remark}
\newtheorem*{question*}{Question}
\newcommand{\integers}{\mathbb{Z}}
\newcommand{\ZZ}{\integers}
\newcommand{\homeo}{\operatorname{Homeo}}
\newcommand{\la}{\langle}
\newcommand{\ra}{\rangle}
\newcommand{\pmcg}{\operatorname{PMCG}}
\newcommand{\cpmcg}{\pmcg_c}
\newcommand{\mcg}{\operatorname{MCG}}
\newcommand{\flute}{S_{\mathrm{flute}}}
\newcommand{\dirlim}{\varinjlim}
\newcommand{\figref}[1]{\hyperlink{#1}{\ref*{fig:#1}}}
\newcommand{\mailurl}[1]{\email{\href{mailto:#1}{#1}}}
\title[Cohomology of MCG of genus one and zero surfaces]{First cohomology of pure mapping class groups of big genus one and zero surfaces}
\author{George Domat}
\address{(George Domat) Department of Mathematics, University of Utah, Salt Lake City, UT 84102, USA.}
\author{Paul Plummer}
\address{(Paul Plummer) Department of Mathematics, University of Oklahoma, Norman, OK 73019, USA.}
\thanks{Domat was partially supported by NSF DMS-1607236 amd NSF DMS-1840190.}
\thanks{Plummer was partially supported by NSF DMS-1651963 and NSF DMS-1611758}
\keywords{infinite-type surface; big mapping class groups; group cohomology; polish groups; automatic continuity}
\subjclass[2010]{57M07, 57S05, 20F65}
\begin{document}

\begin{abstract}
 We prove that the first integral cohomology of pure mapping class groups of infinite type genus one surfaces is trivial. For genus zero surfaces we prove that not every homomorphism to $\ZZ$ factors through a sphere with finitely many punctures. In fact, we get an uncountable family of such maps. 
\end{abstract}
\maketitle
\tableofcontents


\section{Introduction}

Previous work in \cite{bigcohomology1} describes the first integral cohomology of pure mapping class groups of infinite type surfaces of genus at least 2 in terms of ends accumulated by genus. They prove that the cohomology of the compactly supported pure mapping class group is trivial and that the only other homomorphisms come from  handleshifts. In the finite type setting for surfaces without boundary it is known that once the genus is at least one the integral cohomology of the pure mapping class group is trivial. For a sphere with finitely many punctures the rank of the first cohomology is a function of the number of punctures. 
 
We investigate the integral cohomology in the infinite type genus one and zero cases. In the genus one case we prove the following.

\begin{theorem}\label{genus1}
Let $S$ be a genus one surface without boundary and with a nonempty closed set of marked points, representing punctures. Let $\pmcg(S)$ be the pure mapping class group of $S$. Then $H^1(\pmcg(S);\integers)=0$.
\end{theorem}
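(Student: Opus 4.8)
The plan is to identify $H^1(\pmcg(S);\ZZ)$ with $\operatorname{Hom}(\pmcg(S),\ZZ)$ and to show that every homomorphism $\phi\colon \pmcg(S)\to\ZZ$ is trivial. The first and deepest ingredient is continuity: since $\pmcg(S)$ is a Polish group and $\ZZ$ is discrete, I would invoke automatic continuity to conclude that $\phi$ is continuous, so that $\ker\phi$ is an open subgroup. Consequently $\ker\phi$ contains a basic neighborhood $\mathcal{V}_\Sigma$ of the identity, namely the subgroup of classes admitting a representative that is the identity on a compact subsurface $\Sigma$; enlarging $\Sigma$ if necessary, I may assume $\mathcal{V}_{\Sigma}\subseteq\ker\phi$ for every compact subsurface containing a fixed $\Sigma_0$. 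In particular $\phi$ kills every Dehn twist whose supporting annulus is disjoint from $\Sigma$, and crucially it kills the twists about the boundary curves of any such $\Sigma$, since those twists are supported in a collar lying just outside $\Sigma$.

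The heart of the argument is a capping reduction to the finite-type case. Fix a compact subsurface $\Sigma\supseteq\Sigma_0$ of genus one with finitely many marked points, and restrict $\phi$ to the finite-type subgroup $\mcg(\Sigma,\partial\Sigma)\hookrightarrow\pmcg(S)$ of classes supported in $\Sigma$. By the previous paragraph this restriction annihilates all boundary twists of $\Sigma$. Capping each boundary component of $\Sigma$ with a once-marked disk yields a homomorphism $\mcg(\Sigma,\partial\Sigma)\to\mcg(\widehat\Sigma)$ whose kernel is generated precisely by the boundary twists, where $\widehat\Sigma$ is a closed genus-one surface with finitely many marked points. Hence $\phi|_{\mcg(\Sigma,\partial\Sigma)}$ factors through $\mcg(\widehat\Sigma)$. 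As noted in the introduction, the pure mapping class group of a finite-type surface of genus at least one without boundary has trivial first integral cohomology, so $\operatorname{Hom}(\mcg(\widehat\Sigma),\ZZ)=0$ and therefore $\phi$ vanishes on all of $\mcg(\Sigma,\partial\Sigma)$. Since every compactly supported Dehn twist lies in $\mcg(\Sigma,\partial\Sigma)$ for $\Sigma$ chosen large enough, and such twists generate $\cpmcg(S)$, I conclude $\phi|_{\cpmcg(S)}=0$.

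Finally I would pass from the compactly supported group to the whole group. Because $S$ has genus exactly one, no end of $S$ is accumulated by genus, so there are no handleshifts; by the Aramayona--Patel--Vlamis description of $\pmcg(S)/\overline{\cpmcg(S)}$ this forces $\cpmcg(S)$ to be dense in $\pmcg(S)$. A continuous homomorphism to the Hausdorff group $\ZZ$ that vanishes on a dense subgroup vanishes identically, so $\phi=0$ and $H^1(\pmcg(S);\ZZ)=0$.

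The main obstacle I anticipate is the continuity step. Everything downstream is essentially formal once $\ker\phi$ is known to be open: the capping reduction and the density argument are clean, but it is the automatic continuity of $\pmcg(S)$, together with the care needed to verify that the boundary twists of the exhausting subsurfaces genuinely lie in the resulting open kernel, that carries the real weight. The genus-one hypothesis enters decisively in two places: it guarantees that the capped surfaces $\widehat\Sigma$ have positive genus, so the finite-type vanishing applies, and it guarantees the absence of handleshifts, so compactly supported classes are dense. Both of these fail in genus zero, consistent with the nontrivial homomorphisms found there.
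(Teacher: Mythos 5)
Your proof is correct, but it reduces to finite type by a different mechanism than the paper does, and the comparison is instructive. Both arguments share the same skeleton: automatic continuity (Dudley's theorem plus the Polish group structure), and density of $\cpmcg(S)$ in $\pmcg(S)$ (Patel--Vlamis) to pass from the compactly supported group to the whole group. The difference is in how each kills $\phi$ on $\cpmcg(S)$. The paper works with an exhaustion $F_0\subset F_1\subset\cdots$, uses Korkmaz's computation $H_1(\pmcg(S_{1,n}^b);\ZZ)\cong\ZZ^b$ together with Gervais star relations to prove the relation $12\tau=\sum_i\partial_i$ and to track how the homology bases map under gluing a punctured disk, punctured annulus, or pair of pants; it then shows a nonzero restriction to some $\pmcg(F_n)$ forces a sequence of boundary twists about curves leaving every compact set on which $\phi$ is nonzero, contradicting continuity via a convergent infinite product of twists whose image in $\ZZ$ diverges. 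You instead use continuity once, to get an open kernel containing $\mathcal{V}_{\Sigma_0}$, so that all twists about curves disjoint from $\Sigma_0$ die; then the capping (Birman) exact sequence, whose kernel is the central subgroup generated by the boundary twists, lets you factor $\phi$ restricted to classes supported in a large genus-one compact $\Sigma$ through $\pmcg(\widehat\Sigma)$, and the torsion abelianization $H_1(\pmcg(S_{1,n});\ZZ)\cong\ZZ/12\ZZ$ finishes. Your route is shorter and localizes the genus-one input to a single fact (torsion $H_1$ of the closed case), entirely avoiding the paper's Lemmas 3.3 and 3.4; note that these two inputs are essentially equivalent, since $12\tau=\sum_i\partial_i$ is precisely the relation that becomes $12\tau=0$ after capping.

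One step in your write-up deserves care: the assertion that the subgroups $\mathcal{V}_\Sigma$ (classes with a representative equal to the identity on a compact $\Sigma$) form a neighborhood basis of the identity is true but nontrivial---it is established in Aramayona--Patel--Vlamis and should be cited, not treated as immediate from the definition of the compact-open topology (the Alexander-method subtleties, e.g.\ the hyperelliptic involution fixing all curves of a one-holed torus, are exactly what makes it delicate). Alternatively, you can bypass it entirely by invoking the paper's Lemma \ref{compactlemma}, which already supplies exactly what you need: a compact $K_0$ such that $\phi(T_\alpha)\neq 0$ implies $\alpha\cap K_0\neq\emptyset$. With that substitution your argument uses only facts already proved or cited in the paper.
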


When $S$ is of finite type this is well known \cite{FM11}, so we will focus on the case when $S$ is not of finite type.

The idea of the proof is that if we have a homomorphism to $\ZZ$ which is not zero, then it is non-zero on some Dehn twist about a separating curve. In turn this will imply it is non-zero on a sequence of Dehn twists whose product converges in the group but whose image in $\ZZ$ will not converge. Work of \cite{bigcohomology1} and \cite{dudley1961} shows that any homomorphism to $\ZZ$ has to be continuous. We will use this sequence of Dehn twists to contradict the continuity of the homomorphism. We make use of the Gervais star presentation \cite{gervais01} to see how the homology groups of a finite type exhaustion fit together.

For surfaces of genus zero we have forgetful maps to finite type punctured spheres. The first integral cohomology of pure mapping class groups of spheres with at least four punctures is nontrivial so we cannot hope for a similar result as in Theorem \ref{genus1} for the genus zero case. However, we can ask: Do all homomorphisms from the pure mapping class group of an infinite type genus zero surface to $\ZZ$ factor through a forgetful map to a sphere with finitely many punctures? We answer the question negatively by constructing a specific homomorphism for the flute surface which does not factor as such. 
When $S$ is of infinite type and genus 0 there is always a forgetful map to the flute surface. The specifics of the construction then lead to the following theorem:
\begin{theorem}\label{genus0}
Let $S$ be a genus zero infinite type surface. Then the integral cohomology group $H^1( \pmcg (S); \integers)$ contains cohomology classes  which do not come from forgetful maps to finite type genus zero surfaces. Furthermore, there is an uncountable family of such classes.
\end{theorem}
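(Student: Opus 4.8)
The plan is to reduce to the flute surface and there exhibit, for each infinite subset $S\subseteq\{3,4,5,\dots\}$, an explicit homomorphism $\phi_S:\pmcg(\flute)\to\integers$ built out of infinitely many punctures, and then to argue that none of the $\phi_S$ factor through a finite forgetful map while the classes that do so form only a countable subgroup. Since every infinite type genus zero $S$ carries a forgetful map $\pmcg(S)\to\pmcg(\flute)$, I would pull the $\phi_S$ back and check that the pullback stays injective and preserves non-factorization (via compatibility of the forgetful maps), transporting the whole uncountable family to $\pmcg(S)$. It therefore suffices to treat $S=\flute$.

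For the setup on the flute, label the punctures $x_1,x_2,\dots$ accumulating at the unique end $q_0$, modeled as $\mathbb{C}\smallsetminus\{x_n\}$. For each $n\geq 3$ set $B_n=A_{1n}-A_{2n}$, where $A_{ij}$ is the pairwise winding-number coordinate (the linking number of the strands $x_i,x_j$) coming from the braid-type structure of $\mathrm{PMod}$ of a finite punctured disk; each $B_n$ is a genuine homomorphism depending on only finitely many punctures, hence is itself a finite-type forgetful class. The crucial feature of the difference is that $B_n$ measures the winding of $x_n$ about $x_1$ \emph{relative} to $x_2$, so it vanishes on every class supported off a disk $K\supseteq\{x_1,x_2\}$: if $x_1$ and $x_2$ move together, their relative winding with any $x_n$ is $0$. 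I then define $\phi_S:=\sum_{n\in S}B_n$. On the compactly supported group $\cpmcg(\flute)$ this is a finite sum on each element, hence a homomorphism, and by the vanishing just noted it kills $\cpmcg(\flute)\cap V_K$ (classes supported off $K$), so it is continuous there with open kernel.

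Next I would extend $\phi_S$ from the dense subgroup $\cpmcg(\flute)$ to all of $\pmcg(\flute)$. Since there are no handle shifts in genus zero, $\cpmcg(\flute)$ is dense, $V_K$ is an open subgroup with $\cpmcg(\flute)\cdot V_K=\pmcg(\flute)$, and $\phi_S$ is uniformly continuous for the associated non-archimedean uniformity, so the continuous extension exists and is again a homomorphism into the complete discrete group $\integers$. The correction term in $B_n=A_{1n}-A_{2n}$ is exactly what forces $\phi_S$ to vanish on the full twists $T_{a_n}$ about curves enclosing $\{x_1,\dots,x_n\}$: these satisfy $T_{a_n}\to\mathrm{id}$, so continuity demands $\phi_S(T_{a_n})=0$, and indeed $B_n(T_{a_n})=1-1=0$, whereas the uncorrected sum $\sum A_{1n}$ would blow up along this convergent sequence. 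Note that continuity is not an extra hypothesis here but is automatic by Dudley's theorem together with \cite{bigcohomology1}.

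Finally I would separate these classes from the forgetful ones. The classes coming from forgetful maps to finite type spheres form a countable subgroup, being the union over finite puncture sets of the countable groups $H^1(\mathrm{PMod}(S_{0,k});\integers)\cong\integers^{r_k}$. Testing on the twist $T_{c_{1n}}$ about a curve enclosing $\{x_1,x_n\}$ gives $\phi_S(T_{c_{1n}})=1$ precisely when $n\in S$, while any forgetful class vanishes on $T_{c_{1n}}$ once $x_n$ is among the forgotten punctures; hence each $\phi_S$ with $S$ infinite is non-forgetful, distinct $S$ yield distinct $\phi_S$, and there are uncountably many. As the forgetful subgroup is countable, uncountably many of the $\phi_S$ even remain distinct modulo it. I expect the main obstacle to be the two technical points underpinning well-definedness: proving the geometric vanishing of the relative winding $B_n$ on $V_K$ (so that $\phi_S$ has open kernel on $\cpmcg(\flute)$), and justifying the continuous extension to $\pmcg(\flute)$ as a homomorphism, namely making precise that $\pmcg(\flute)$ is the completion of $\cpmcg(\flute)$ and that uniformly continuous homomorphisms extend over it.
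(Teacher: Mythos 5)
Your construction is the paper's construction in different notation: your $B_n = A_{1n}-A_{2n}$ is exactly the paper's class sending $T_{\gamma_{an}}\mapsto 1$, $T_{\gamma_{bn}}\mapsto -1$ (with $a=x_1$, $b=x_2$), your family $\phi_S=\sum_{n\in S}B_n$ over infinite subsets $S$ is the paper's family $\phi_A$ indexed by subsets with infinite complement, and the key vanishing on classes supported off a disk containing the two distinguished punctures, the non-factorization test on twists about $\gamma_{1n}$, and the reduction of a general genus zero surface to $\flute$ via a forgetful map are all the same. The one genuine difference is the extension step from $\cpmcg(\flute)$ to $\pmcg(\flute)=\overline{\cpmcg(\flute)}$: the paper realizes each element of the closure as an infinite product of Dehn twists about curves leaving every compact set (a consequence of Patel--Vlamis) and evaluates term by term, whereas you observe that $\ker\phi_S$ contains the open subgroup of classes supported off $K$ and invoke the standard extension of uniformly continuous homomorphisms from a dense subgroup into a complete (discrete) target; your route is arguably cleaner, as it sidesteps the paper's somewhat delicate check that different infinite-product representatives give the same value. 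Your additional observation that the forgetful classes form a countable subgroup does not appear in the paper and is not needed for the statement, but it does sharpen the uncountability conclusion (uncountably many classes remain distinct even modulo the forgetful ones).
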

The uncountability comes from being able to vary the construction to encode a Cantor set of cohomology classes. 

This section of the paper will rely on the fact that any finite type subsurface of a genus zero surface has mapping class group isomorphic to a braid group. \cite{Fabel06} contains some results and discussions on infinite stranded braid groups and its connections to the mapping class group of a disk with infinitely many punctures. 

Our results together with those from \cite{bigcohomology1} give an almost complete picture of the first integral cohomology of these big pure mapping class groups. The only piece still missing is an explicit description of the cohomology in the genus zero case. The cohomology groups break down into the following categories:
\begin{itemize}
    \item $S$ has more than one end accumulated by genus (\cite{bigcohomology1}): The rank of $H^{1}(\pmcg(S);\ZZ)$ is one less than the number of ends accumulated by genus if there are finitely many such ends and infinite if there are infinitely many. Furthermore, all non-trivial classes are not supported on finite type subsurfaces. 
    \item $S$ has at most one end accumulated by genus with $g>0$ (\cite{bigcohomology1} and Theorem \ref{genus1}): $H^{1}(\pmcg(S);\ZZ)$ is trivial. 
    \item $S$ is genus zero (Theorem \ref{genus0}): $H^{1}(\pmcg(S);\ZZ)$ has infinite rank and contains classes both supported on finite type subsurfaces and ones that are not. 
\end{itemize}

\begin{question*}
    What is an explicit description of $H^{1}(\pmcg(S);\ZZ)$ when $S$ is genus zero?
\end{question*}

Throughout the paper we will identify $H^1(G;\ZZ)$ with the set of homomorphisms $G \to \ZZ.$ We will also usually be considering punctures as marked points and conflating compact subsurfaces with finite type subsurfaces for the sake of convenience. 

\textbf{Acknowledgments:} We thank Mladen Bestvina and Jing Tao for numerous helpful conversations throughout this project. We also thank Javiar Aramayona, Priyam Patel, and Nick Vlamis for their interest and comments on an earlier draft. We thank the referree for numerous helpful comments. This work was started at the Fields Institute's Thematic Program on Teichm\"{u}ller Theory and its Connections to Geometry, Topology and Dynamics and as such we thank the institute and the organizers for their support. 

\section{Background}

\subsection{Big pure mapping class group}

Let $S$ be a connected, orientable, second-countable surface, possibly with boundary. Let $\homeo^{+}_{\partial}(S)$ be the group of orientation preserving homeomorphisms of $S$ which fix the boundary pointwise. The \textbf{mapping class group}, $\mcg(S)$, is defined to be 
\begin{align*}
    \mcg(S) = \homeo_{\partial}^{+}(S)/\sim
\end{align*}
where two homeomorphisms are equivalent if they are isotopic relative to the boundary of $S$. $\homeo_{\partial}^{+}(S)$ is equipped with the compact-open topology, which induces the quotient topology on $\mcg(S)$. Subgroups of $\mcg(S)$ come equipped with the subspace topology.

The \textbf{pure mapping class group}, $\pmcg(S)$, is defined to be the kernel of the action of $\mcg(S)$ on the space of ends of $S$. 

We say $f \in \mcg(S)$ is \textbf{compactly supported} if $f$ has a representative that is the identity outside of a compact set in $S$. We denote the subgroup of $\mcg(S)$ of compactly supported mapping classes as $\cpmcg(S)$. Note that any compactly supported mapping class is automatically in $\pmcg(S)$. Patel and Vlamis proved

\begin{theorem}[\cite{patelvlamis}]\label{closureofcompact}
$\overline{\cpmcg(S)} = \pmcg(S)$ if and only if $S$ has at most one end accumulated by genus. In particular, $\overline{\cpmcg(S)} = \pmcg(S)$ if $S$ is genus one or zero. 
\end{theorem}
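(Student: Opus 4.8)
The plan is to establish the stated equivalence; the ``in particular'' claim is then immediate, since a surface of genus zero or one has total genus at most one, so no end can have infinitely many handles accumulating to it. Such surfaces therefore have zero ends accumulated by genus, which is in particular at most one. Throughout I use that the topology on $\pmcg(S)$ is the permutation topology on isotopy classes of simple closed curves: a sequence $g_n$ converges to $g$ exactly when, for each isotopy class $c$, one has $g_n(c)=g(c)$ for all sufficiently large $n$. Since $\cpmcg(S)\subseteq \pmcg(S)$ and the latter is closed in $\mcg(S)$ (being the kernel of the continuous action on ends), we always have $\overline{\cpmcg(S)}\subseteq \pmcg(S)$; the content is to decide when equality holds.

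For the forward implication I argue by contrapositive: assuming $S$ has at least two ends accumulated by genus, I produce an element of $\pmcg(S)$ lying outside $\overline{\cpmcg(S)}$. Fix two distinct ends $x,y$ accumulated by genus and a properly embedded bi-infinite chain of handles limiting to $x$ in one direction and to $y$ in the other; let $h$ be the \textbf{handle shift} that translates this chain by one handle and is the identity off a neighborhood of it. Then $h$ fixes every end, so $h\in\pmcg(S)$. To see $h\notin\overline{\cpmcg(S)}$ I use a conservation law: cutting along a separating multicurve that divides $x$ from $y$, one obtains an additive, isotopy-invariant ``flux'' counting the net number of handles transported across the cut, giving a continuous homomorphism $\phi\colon\pmcg(S)\to\ZZ$ with $\phi(h)\neq 0$. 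Every compactly supported class transports no genus to infinity, so $\cpmcg(S)\subseteq\ker\phi$; as $\ker\phi$ is closed, $\overline{\cpmcg(S)}\subseteq\ker\phi$, which does not contain $h$.

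For the reverse implication, suppose $S$ has at most one end accumulated by genus, fix an exhaustion $S_1\subseteq S_2\subseteq\cdots$ of $S$ by finite-type subsurfaces with $\bigcup_i S_i=S$, and let $f\in\pmcg(S)$. I construct compactly supported $g_i$ with $g_i(c)=f(c)$ for every isotopy class $c$ supported in $S_i$; such $g_i$ then converge to $f$ in the permutation topology. To build $g_i$, choose $j$ with $S_i\cup f(S_i)\subseteq S_j$ and isotope $f$ so that it becomes the identity outside $S_j$, whereupon its restriction defines the compactly supported class $g_i$. The only obstruction is whether $f$ carries the complementary region $S\setminus S_i$ to its image compatibly with the end structure, and here the hypothesis is decisive: because $f$ fixes every end and at most one end is accumulated by genus, all infinite genus in each complementary piece is concentrated at the single fixed accumulation point, so the classification of surfaces (change of coordinates) lets me straighten $f$ to the identity near infinity.

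I expect the reverse (density) implication to be the main obstacle: making precise the straightening of $f$ to the identity outside a compact set requires careful bookkeeping of the ends and genus of the complementary regions and a correct application of the change-of-coordinates principle. This step is exactly where the hypothesis ``at most one end accumulated by genus'' is used — with two such ends the straightening fails, and that failure is precisely the handle-shift obstruction detected by $\phi$ in the forward direction.
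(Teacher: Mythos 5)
This theorem is not proved in the paper at all: it is imported from Patel--Vlamis \cite{patelvlamis} as a black box, so the comparison must be with their argument rather than with anything internal to this paper. Your two-implication structure is the right one and your reduction of the ``in particular'' clause is correct, but both implications contain genuine gaps, and one of them is fatal as written.

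The fatal gap is in the reverse (density) implication, exactly the step you flag as the main obstacle: ``isotope $f$ so that it becomes the identity outside $S_j$.'' This is not a hard-but-fillable step; it is false, and in a way that begs the question. A mapping class admitting a representative that is the identity outside a compact set is, by definition, compactly supported, so if every $f\in\pmcg(S)$ could be so isotoped you would have proved $\pmcg(S)=\cpmcg(S)$, which fails for every infinite-type surface: an infinite product of Dehn twists about disjoint curves leaving every compact set (such as the elements used in Section 4 of this paper for $\flute$) lies in $\pmcg(S)\setminus\cpmcg(S)$. This is precisely why the theorem concerns the \emph{closure}. The correct move is to leave $f$ alone and construct a genuinely different, compactly supported $g_i$ whose restriction to $S_i$ agrees with $f|_{S_i}$ up to isotopy; concretely, one must extend $f|_{S_i}\colon S_i\to f(S_i)\subset S_j$ to a homeomorphism of $S_j$ that is the identity on $\partial S_j$, i.e.\ produce a homeomorphism $S_j\setminus S_i\to S_j\setminus f(S_i)$ restricting to $f$ on $\partial S_i$ and to the identity on $\partial S_j$. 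Deciding when the complementary pieces match up --- genus, boundary, and compatibility with the ends beyond $S_j$ --- is exactly the bookkeeping you defer; it is where the hypothesis of at most one end accumulated by genus enters, and it is the entire content of Proposition 6.2 of \cite{patelvlamis}. As written, this direction is asserted, not proved.

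In the forward implication the logic is sound but runs through an object you are not entitled to: the continuous ``flux'' homomorphism $\phi\colon\pmcg(S)\to\ZZ$ that vanishes on $\cpmcg(S)$ and is nonzero on the handle shift. Constructing such a $\phi$ and verifying that it is well defined, additive, and continuous is essentially the main construction of \cite{bigcohomology1}; invoking it wholesale replaces the claim to be proved by a harder one. Note that no homomorphism is needed. In the topology you describe, $g_n\to h$ forces $g_n(\gamma)=h(\gamma)$ eventually for each fixed curve $\gamma$, so it suffices to exhibit a single separating curve $\gamma$ with $h(\gamma)\neq g(\gamma)$ for every $g\in\cpmcg(S)$. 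Choose $\gamma$ crossing the shift region so that $h(\gamma)$ is disjoint from $\gamma$ and cobounds with it a genus-one region. Given $g$ with representative supported in a compact $K\supseteq\gamma\cup h(\gamma)$, pick a separating curve $\delta$ in $S\setminus K$ cutting $K$ off from the end $\mu_-$; then $g$ fixes $\delta$ and carries the region cobounded by $\delta$ and $\gamma$ homeomorphically onto the region cobounded by $\delta$ and $g(\gamma)$, so these have equal genus, whereas the region cobounded by $\delta$ and $h(\gamma)$ has genus exactly one larger (genus is additive under gluing along a separating curve, and the cobounded genus is an isotopy invariant among curves disjoint from $\delta$, by an innermost-bigon argument). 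Hence $g(\gamma)\neq h(\gamma)$ for all compactly supported $g$, and $h\notin\overline{\cpmcg(S)}$, with no appeal to \cite{bigcohomology1}.
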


We say that a sequence of curves $\{\alpha_{i}\}$ \textbf{leaves every compact set} if for every compact subset $K$ of $S$ there exists an $N>0$ such that $\alpha_{n}\cap K = \emptyset$ for all $n>N$. 

\subsection{Polish groups and automatic continuity}

\begin{definition}
    A \textbf{Polish group} is a topological group that is separable and completely metrizable as a topological space.
\end{definition}

\begin{lemma}[\cite{bigcohomology1}, Corollary 2.5]\label{closedsubgroupspolish}
The group $\mcg^\pm(S)$ is a Polish group with the compact-open topology. Hence $\pmcg(S)$ is a Polish group as it is a closed subgroup.
\end{lemma}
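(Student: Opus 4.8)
The statement has two independent pieces, and the plan is to treat them in order of difficulty. The genuinely substantive claim is that $\mcg^{\pm}(S)$ is Polish; once that is in hand, the assertion about $\pmcg(S)$ is formal. Indeed, $\pmcg(S)$ is by definition the kernel of the (continuous) action of $\mcg(S)$ on the space of ends of $S$, which is a compact, metrizable, totally disconnected space; since the target group $\homeo(\operatorname{Ends}(S))$ is Hausdorff, this kernel is closed, and $\mcg(S)$ is itself a closed index-two subgroup of $\mcg^{\pm}(S)$. A closed subgroup of a Polish group is again Polish---a closed subspace of a complete metric space is complete, and a subspace of a separable metric space is separable---so the ``hence'' follows at once. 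Thus I would reduce everything to producing a complete separable metric realizing the compact-open topology on $\mcg^{\pm}(S)$.

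To build such a metric I would realize $\mcg^{\pm}(S)$ as a closed subgroup of the symmetric group of a countable set, exploiting that $\operatorname{Sym}(\mathbb{N})$---with the topology of pointwise convergence on the discrete set $\mathbb{N}$---is Polish (it is a $G_\delta$ subset of the Baire space $\mathbb{N}^{\mathbb{N}}$ and a topological group), and that closed subgroups of Polish groups are Polish. Let $C$ denote the set of isotopy classes of essential simple closed curves on $S$. Because $S$ is second countable its fundamental group is countable, so $C$ is countable. The action of $\mcg^{\pm}(S)$ on $C$ is faithful by the Alexander method, which for infinite-type surfaces guarantees that a nontrivial mapping class must move the isotopy class of some curve in a suitable filling family. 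This yields an injective homomorphism $\mcg^{\pm}(S)\hookrightarrow \operatorname{Sym}(C)$.

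The heart of the argument---and the step I expect to be the main obstacle---is to verify that this injection is a topological embedding whose image is closed, i.e.\ that the compact-open topology on $\mcg^{\pm}(S)$ coincides with the subspace topology inherited from the permutation topology on $\operatorname{Sym}(C)$, and that no new permutations appear in the closure. For the comparison of topologies one shows that a basic permutation-topology neighborhood of the identity (mapping classes fixing a prescribed finite collection of curve classes) is a compact-open neighborhood, and conversely; the content is that a homeomorphism is isotopically trivial on a large compact subsurface precisely when it fixes the isotopy classes of a finite family of curves filling that subsurface, which one extracts from an exhaustion of $S$ by finite-type subsurfaces together with the change-of-coordinates principle. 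For closedness one must show that if a permutation $\sigma$ of $C$ is a limit of mapping classes, then the consistent pattern of curve images recorded by $\sigma$ is actually realized by a homeomorphism of $S$; this realization is obtained by gluing together the homeomorphisms realized on each member of the exhaustion, whose mutual compatibility is guaranteed because $\sigma$ respects geometric intersection numbers and the combinatorics of the curve complex. Granting these two points, $\mcg^{\pm}(S)$ is a closed subgroup of the Polish group $\operatorname{Sym}(C)\cong\operatorname{Sym}(\mathbb{N})$ and is therefore Polish, which completes the proof.
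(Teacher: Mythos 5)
Your overall route is the same one the paper relies on. The paper does not prove this lemma at all: it cites \cite{bigcohomology1}, Corollary 2.5, and the remark immediately following the lemma points to the real engine, namely the theorem of \cite{HMV17} and \cite{BDR17} that the automorphism group of the curve graph of an infinite-type surface is precisely $\mcg^\pm(S)$. Granting that theorem, the cited proof is essentially what you outline: the curve graph has countably many vertices, its automorphism group is a closed subgroup of $\operatorname{Sym}(\mathbb{N})$ in the permutation topology and hence Polish, the identification with $\mcg^\pm(S)$ is a homeomorphism when the latter carries the compact-open topology, and $\pmcg(S)$ is then Polish as a closed subgroup. Your handling of the ``hence'' clause is also correct: $\pmcg(S)$ is an intersection of stabilizers of ends under a continuous action, hence closed, and closed subgroups of Polish groups are Polish.

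The genuine gap is in the step you yourself flag as the main obstacle. Your argument that the image is closed --- that a permutation $\sigma$ of the curve set arising as a pointwise limit of mapping classes is realized by a homeomorphism, ``obtained by gluing together the homeomorphisms realized on each member of the exhaustion'' --- is not a proof but a restatement of a deep theorem. What a limiting permutation actually hands you is an automorphism of the curve graph: being a pointwise limit of graph automorphisms, $\sigma$ preserves disjointness and non-disjointness, but nothing more; in particular preservation of geometric intersection numbers, which your gluing step leans on, is not free, since adjacency in the curve graph only records disjointness. The assertion that every automorphism of the curve graph of an infinite-type surface is geometric is an Ivanov-type rigidity theorem, and it is exactly the content of \cite{HMV17} and \cite{BDR17}; the mutual compatibility of finite-type realizations across an exhaustion is what those papers must work hard to establish. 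So your proposal has the correct skeleton --- identical to the one in the literature the paper cites --- but its crucial step is a citation-sized hole: either invoke \cite{HMV17} and \cite{BDR17}, as the paper implicitly does, or be prepared to reprove a substantial rigidity theorem rather than sketch it.
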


\begin{remark}
The above is using work in \cite{HMV17} and \cite{BDR17} which shows that the automorphism group of the curve graph of $S$ is isomorphic to $\mcg^\pm(S)$.
\end{remark}

This is important for our applications because of the following theorem of Dudley. Dudley's theorem is more general but we state the relevant version of the theorem to our work. 

\begin{theorem}[\cite{dudley1961}]\label{automaticcontinuity}
    Every homomorphism from a Polish group to $\ZZ$ is continuous.
\end{theorem}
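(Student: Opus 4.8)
The target $\ZZ$ is discrete, so a homomorphism $\phi\colon G\to\ZZ$ is continuous if and only if it is continuous at the identity, i.e.\ if and only if there is a neighborhood $U$ of the identity $e$ with $\phi(U)=\{0\}$. The plan is to assume $\phi$ is discontinuous and derive a contradiction, using the completeness built into the definition of a Polish group in an essential way (the statement is genuinely false for the target $\ZZ$ replaced by $\mathbb{R}$, so no soft argument can work).

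First I would reduce discontinuity to an unboundedness statement: \emph{if $\phi$ is discontinuous, then $\phi$ is unbounded on every neighborhood of $e$}. Indeed, suppose $|\phi|\le M$ on some neighborhood $U$. Since multiplication is continuous, the map $g\mapsto (g,g^{2},\dots,g^{M+1})$ is continuous, so there is a neighborhood $W$ of $e$ with $g^{j}\in U$ for all $g\in W$ and $1\le j\le M+1$. For $g\in W$ we then have $(M+1)\,|\phi(g)|=|\phi(g^{M+1})|\le M$, which forces $\phi(g)=0$. Thus $\phi(W)=\{0\}$ and $\phi$ is continuous, a contradiction. In particular, every neighborhood of $e$ contains elements whose image has arbitrarily large absolute value, and (applying $g\mapsto g^{-1}$) this unboundedness is symmetric.

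Next I would use completeness to promote this unboundedness into an impossible element. A Polish group carries a compatible complete metric, so a sufficiently rapidly convergent infinite product converges in $G$. Using the previous step I would recursively select elements $h_{n}$ tending to $e$ together with exponents $a_{n}$, with each $\phi(h_{n})\neq 0$, so that the partial products $P_{n}=h_{1}^{a_{1}}\cdots h_{n}^{a_{n}}$ are Cauchy and converge to some $w\in G$; here each $h_{n}$ must be chosen deep enough in a neighborhood basis that even the power $h_{n}^{a_{n}}$ stays small. Writing the honest group identities $w=P_{n}\cdot\tau_{n}$ and $\tau_{n}=h_{n+1}^{a_{n+1}}\tau_{n+1}$ and pushing them through $\phi$, the exponents $a_{n}$ would be arranged (taking them highly divisible, e.g.\ $a_{n}=n!$, and choosing them adaptively once $\phi(h_{n})$ is known) so that the single integer $\phi(w)$ is forced to be divisible by arbitrarily large integers while remaining nonzero. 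Since $\bigcap_{n\ge 1} n\ZZ=\{0\}$, this is the desired contradiction, and it yields the continuity of $\phi$.

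The hard part is exactly this last construction. One must control the convergence of the product precisely enough that the partial products are Cauchy in a compatible complete metric, and—because $G$ need not be abelian and $\phi$ is not yet known to be continuous—one cannot simply sum $\phi$ over the infinite tail $\tau_{n}$ nor treat that tail as a genuine high power. Extracting \emph{honest} divisibility of $\phi(w)$ from the telescoping identities, rather than mere congruences that leave $\phi(w)$ undetermined, is the delicate point and is where the full force of completeness together with the slenderness of $\ZZ$ is used; this is precisely the content of Dudley's argument in \cite{dudley1961}.
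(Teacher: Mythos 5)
The paper does not prove this statement at all: it is imported verbatim from Dudley \cite{dudley1961} (the bracketed citation in the theorem header is the paper's entire ``proof''), so the comparison here is really against Dudley's original argument. Your opening reduction is correct and is indeed the standard first move: if $|\phi|\le M$ on a neighborhood $U$, then choosing $W$ with $g^{j}\in U$ for all $g\in W$ and $1\le j\le M+1$ forces $(M+1)|\phi(g)|\le M$, hence $\phi\equiv 0$ on $W$; this exploits exactly the feature of $\ZZ$ (a discrete norm with $|nx|\ge |n|$ for $x\ne 0$) that makes the theorem true, and fail for target $\mathbb{R}$, as you note.

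However, the remainder of the proposal is a plan rather than a proof, and the missing step is precisely the content of the theorem. You build $w=\lim P_{n}$ with $P_{n}=h_{1}^{a_{1}}\cdots h_{n}^{a_{n}}$, write $w=P_{n}\tau_{n}$ and $\tau_{n}=h_{n+1}^{a_{n+1}}\tau_{n+1}$, and then assert the exponents ``would be arranged'' so that $\phi(w)$ is nonzero yet divisible by arbitrarily large integers. But the telescoped identity only gives $\phi(w)=\sum_{i\le n}a_{i}\phi(h_{i})+\phi(\tau_{n})$, where $\phi(\tau_{n})$ is an uncontrolled integer: $\tau_{n}$ depends on all later choices, it is not a power of anything (so divisibility of its image does not follow from divisibility of the exponents), and $\phi$ is not known to be continuous (so no limiting argument applies to it). In an abelian group one can repair this by additionally arranging convergence of the ``divided'' products, so that $\tau_{n}$ becomes an honest $c_{n}$-th power and $\phi(\tau_{n})\in c_{n}\ZZ$; in a general, possibly non-abelian, Polish group that device is unavailable — and you say so yourself, deferring the resolution to ``precisely the content of Dudley's argument.'' Since the statement under proof \emph{is} Dudley's theorem, that deferral is circular: the attempt correctly locates the difficulty but does not overcome it, so there is a genuine gap at the decisive point. (The Cauchy bookkeeping for a complete but possibly non-invariant metric — choosing each $h_{n}$ so that left translates of all relevant partial products move by less than $2^{-n}$ — is also glossed over, but that part is routine; the tail-control issue is not.)
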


We can use these results to obtain the following necessary condition for a map from $\pmcg(S)$ to $\ZZ$ to be a homomorphism. 

\begin{lemma}\label{compactlemma}
    For any surface $S$ and homomorphism $f\colon\pmcg(S) \rightarrow \ZZ$, $f$ cannot be non-zero on a sequence of Dehn twists about curves that leave every compact (or finite type) subsurface of $S$. In other words, there is a compact (or finite type) subsurface $K_{0} \subset S$ such that if $f(T_{\alpha}) \neq 0$ then $\alpha \cap K_{0} \neq \emptyset$ where $T_{\alpha}$ is the Dehn twist about the curve $\alpha$. 
\end{lemma}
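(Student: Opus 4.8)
The plan is to argue by contradiction: from a bad sequence of twists I will manufacture a single convergent infinite product of commuting twists on which $f$ is forced to diverge, contradicting the automatic continuity supplied by Lemma \ref{closedsubgroupspolish} and Theorem \ref{automaticcontinuity}. So suppose $\{\alpha_i\}$ is a sequence of curves that leaves every compact subsurface of $S$ with $f(T_{\alpha_i})\neq 0$ for all $i$. First I would pass to a subsequence of pairwise disjoint curves: having chosen $\alpha_{i_1},\dots,\alpha_{i_k}$, all contained in some compact set $K$, only finitely many of the remaining curves can meet $K$ (since the sequence leaves $K$), so all but finitely many are disjoint from the ones chosen so far, and I select one of these as $\alpha_{i_{k+1}}$. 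Relabelling, I obtain curves $\{\beta_k\}$ that are pairwise disjoint, still leave every compact set, and satisfy $f(T_{\beta_k})\neq 0$.

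Next I would form the partial products $g_n = T_{\beta_1}T_{\beta_2}\cdots T_{\beta_n}$. Because the $\beta_k$ are pairwise disjoint and leave every compact set, the family is locally finite, so I may choose pairwise disjoint, locally finite annular supports for the twists. The formal infinite product $g=\prod_{k=1}^{\infty}T_{\beta_k}$ then defines a genuine homeomorphism of $S$ (a twist on each annulus, the identity elsewhere), and for any compact set $K$ only finitely many factors act nontrivially on $K$, so $g_n\to g$ in the compact--open topology. Each $T_{\beta_k}\in\pmcg(S)$, and since $\pmcg(S)$ is a closed (indeed Polish) subgroup by Lemma \ref{closedsubgroupspolish}, the limit satisfies $g\in\pmcg(S)$.

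By Lemma \ref{closedsubgroupspolish} and Dudley's Theorem \ref{automaticcontinuity}, $f$ is continuous, hence $f(g_n)\to f(g)$. But disjoint twists commute and $f$ is a homomorphism, so $f(g_n)=\sum_{k=1}^{n}f(T_{\beta_k})$ and therefore $|f(g_n)-f(g_{n-1})|=|f(T_{\beta_n})|\geq 1$ for every $n$. Thus $\bigl(f(g_n)\bigr)$ is a sequence of integers that fails to be Cauchy, contradicting its convergence to $f(g)$. This rules out any such sequence $\{\alpha_i\}$. For the equivalent reformulation I would fix a compact exhaustion $K_1\subset K_2\subset\cdots$ of $S$: if no single $K_0$ worked, then for each $j$ there would be a curve $\alpha_j$ disjoint from $K_j$ with $f(T_{\alpha_j})\neq 0$, and $\{\alpha_j\}$ would leave every compact set, contradicting what was just proved.

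The step I expect to be the main obstacle is making the second paragraph fully rigorous, namely verifying that the infinite product of disjointly supported twists is a well-defined element of $\pmcg(S)$ and that the partial products genuinely converge in the compact--open topology; the remaining ingredients (commutativity of disjoint twists, the Cauchy/integer argument, and automatic continuity) are then routine.
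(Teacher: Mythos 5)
Your proposal is correct and follows essentially the same route as the paper: both derive a contradiction with automatic continuity (Lemma \ref{closedsubgroupspolish} together with Theorem \ref{automaticcontinuity}) by forming the partial products of the given twists, which converge in $\pmcg(S)$ because the curves leave every compact set, while their images in $\ZZ$ cannot converge. Your additional steps --- passing to a pairwise disjoint subsequence so the limit homeomorphism can be written down explicitly, and using the ``integer sequences converge only if eventually constant'' criterion in place of the paper's normalization $f(T_{\alpha_i})>0$ --- are harmless refinements that simply make rigorous the convergence the paper asserts in one line.
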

\begin{proof}
    The statement is trivial for finite type surfaces. By Lemma \ref{closedsubgroupspolish} and Theorem \ref{automaticcontinuity} $f$ is continuous. Suppose $f$ were nonzero on such a sequence of Dehn twists. Then we could find a sequence of Dehn twists $\{T_{\alpha_{i}}\}$ such that $f(T_{\alpha_{i}}) > 0$ for all $i$ (possibly taking inverses if all twists are negative) and the $\alpha_{i}$ leave every compact set. Then $\prod_{i=1}^{n} T_{\alpha_{i}}$ converges in $\pmcg(S)$ as $n$ goes to infinity. However, $f\left(\prod_{i=1}^{n}T_{\alpha_{i}}\right) = \sum_{i=1}^{n} f(T_{\alpha_{i}})$ does not converge in $\ZZ$, contradicting the continuity of $f$. 
    \end{proof}

We also get the following immediate consequence.

\begin{proposition}
    Let $S$ be an infinite type surface with at most one end accumulated by genus. Let $K_{0} \subset K_{1} \subset \cdots$ be an exhaustion of $S$ by finite type surfaces. These induce inclusions $\pmcg(K_{n}) \hookrightarrow \pmcg(K_{n+1})$ for all $n$. Then we have that $H^{1}(\pmcg(S);\ZZ)$ injects into the inverse limit $\varprojlim H^{1}(\pmcg(K_{n});\ZZ)$. 
\end{proposition}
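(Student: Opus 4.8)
The plan is to construct the natural comparison map by restriction and then prove it is injective using density of the compactly supported subgroup together with automatic continuity. First I would make precise the map
$\Phi\colon H^1(\pmcg(S);\ZZ) \to \varprojlim H^1(\pmcg(K_n);\ZZ)$. Writing $\jmath_n\colon \pmcg(K_n) \to \pmcg(S)$ for the homomorphism obtained by extending a homeomorphism of $K_n$ by the identity on $S \setminus K_n$, and $\iota_n\colon \pmcg(K_n) \hookrightarrow \pmcg(K_{n+1})$ for the given inclusions, I would observe that $\jmath_n = \jmath_{n+1} \circ \iota_n$. Hence, for a homomorphism $f\colon \pmcg(S) \to \ZZ$, the pullbacks $f_n := f \circ \jmath_n$ satisfy $f_n = f_{n+1} \circ \iota_n$, so the sequence $(f_n)_n$ is a coherent element of the inverse limit. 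Setting $\Phi(f) = (f_n)_n$ clearly defines a group homomorphism, and the content of the proposition is that $\Phi$ is injective.

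For injectivity I would suppose $\Phi(f) = 0$, i.e.\ that $f$ vanishes on the image of every $\jmath_n$, and deduce $f \equiv 0$. The key structural observation is that $\cpmcg(S) = \bigcup_n \jmath_n(\pmcg(K_n))$: a compactly supported pure mapping class has a representative that is the identity outside some compact set, which the exhaustion swallows inside some $K_n$, and the restriction to $K_n$ fixes $\partial K_n$ pointwise and fixes every puncture of $S$ lying in $K_n$, so it defines an element of $\pmcg(K_n)$; the converse extension is immediate. Consequently the hypothesis $\Phi(f)=0$ forces $f$ to vanish on all of $\cpmcg(S)$.

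Finally I would invoke the two inputs already recorded in the excerpt. By Lemma \ref{closedsubgroupspolish} and Dudley's theorem (Theorem \ref{automaticcontinuity}), $f$ is continuous; since $\ZZ$ is discrete, $\{0\}$ is closed and hence $f^{-1}(0) = \ker f$ is a closed subgroup of $\pmcg(S)$. As $S$ has at most one end accumulated by genus, Theorem \ref{closureofcompact} gives $\overline{\cpmcg(S)} = \pmcg(S)$, so the closed set $f^{-1}(0)$, which contains the dense subgroup $\cpmcg(S)$, must be all of $\pmcg(S)$. Thus $f = 0$ and $\Phi$ is injective.

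I do not expect a serious obstacle: the argument is a formal combination of the restriction maps with the density-plus-continuity principle that already underlies Lemma \ref{compactlemma}. The one point requiring genuine care is the identification $\cpmcg(S) = \bigcup_n \jmath_n(\pmcg(K_n))$, and in particular verifying that restricting a compactly supported pure class to $K_n$ lands in $\pmcg(K_n)$ rather than merely in $\mcg(K_n)$; this is exactly where the purity hypothesis (fixing the punctures of $S$) and the fact that $\jmath_n$ fixes $\partial K_n$ pointwise are used.
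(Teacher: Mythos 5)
Your proof is correct and follows essentially the same route as the paper: the comparison map is defined by restriction, vanishing on every $\pmcg(K_n)$ forces vanishing on $\cpmcg(S)$, and then automatic continuity (Polish group plus Dudley's theorem) together with the density $\overline{\cpmcg(S)} = \pmcg(S)$ gives $f=0$. Your write-up simply makes explicit two points the paper leaves implicit, namely the identification $\cpmcg(S)=\bigcup_n \jmath_n(\pmcg(K_n))$ and the closedness of $\ker f$.
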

\begin{proof}
    We get a map $H^{1}(\pmcg(S);\ZZ) \rightarrow \varprojlim H^{1}(\pmcg(K_{n});\ZZ)$ by restriction of a cohomology class to the subsurfaces $K_{n}$. Now suppose that $\phi \in H^{1}(\pmcg(S);\ZZ)$ restricts to the zero map on each $\pmcg(K_{n})$. Given any $f \in \pmcg(S)$ we can approximate $f$ by a sequence of compactly supported mapping classes by Theorem \ref{closureofcompact}. Thus by continuity of $\phi$ and Theorems \ref{closedsubgroupspolish} and \ref{automaticcontinuity}, we see that $\phi$ is the zero map on all of $\pmcg(S)$. 
\end{proof}

\section{Genus one}

\subsection{Pure mapping class group of finite type genus one surfaces}

Let $S_{1,n}^{b}$ be a surface of genus $1$ with $n$ punctures and $b$ boundary components. One of the main tools we use is a description of $\pmcg(S_{1,n}^b)$ in terms of Gervais star relations. 

For any subsurface of $S_{1,n}^{b}$ homeomorphic to $S_{1}^{3}$ and curves as in Figure \figref{star} we have the relation $(T_{c_{1}}T_{c_{2}}T_{c_{3}}T_{b})^{3} = T_{d_{1}}T_{d_{2}}T_{d_{3}}$, where $T_{a}$ denotes the Dehn twist about the curve $a$. This relation is called a \textbf{star relation}. We have degenerate star relations when one or more of the boundary curves, $d_{i}$, is null-homotopic in $S_{1,n}^{b}$, e.g., if $d_{3}$ is null-homotopic then the relation becomes $(T_{c_{1}}T_{c_{2}}T_{c_{2}}T_{b})^{3} = T_{d_{1}}T_{d_{2}}$. In fact, a presentation of the mapping class group can be defined using these star relations and braid relations. Such a presentation can be found in \cite{gervais01}.

\begin{figure}
	    \centering
	    \def\svgwidth{.5\columnwidth}
		    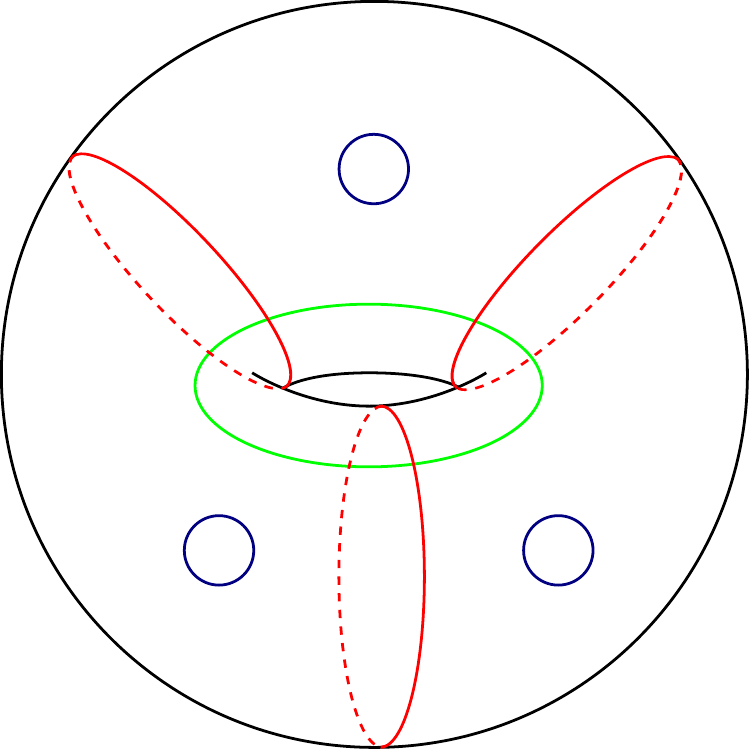
		    \caption{Curves in a Star Relation.}
	    \label{fig:star}
\end{figure}

We will also be using the first homology and cohomology of $\pmcg(S_{1,n}^b)$.

\begin{theorem}[\cite{korkmaz02}]\label{fin-homology}
$H_1(\pmcg(S_{1,n}^b);\ZZ) \cong \ZZ^b$ when $b>0$. Furthermore, we can choose a basis for the homology corresponding to $b-1$ Dehn twists about $b-1$ boundary components and a Dehn twist about a nonseparating curve. When $b=0$ we have $H_1(\pmcg(S_{1,n});\ZZ)=\ZZ/12 \ZZ $.
\end{theorem}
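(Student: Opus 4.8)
The plan is to identify $H_1(\pmcg(S_{1,n}^{b});\ZZ)$ with the abelianization of $\pmcg(S_{1,n}^{b})$ and to compute it from a finite presentation by Dehn twists. Since $S_{1,n}^{b}$ has genus one, its pure mapping class group is generated by Dehn twists, and I would take as relations the braid relations together with the star relations of Figure \figref{star}, i.e.\ the Gervais star presentation recalled above (with degenerate star relations included; note that the lantern relation is itself a degenerate star relation). Passing to the abelianization rewrites every relation additively with all conjugations deleted, so the whole computation collapses to linear algebra over $\ZZ$ on the generating twists.

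First I would organize the generators by curve type. I claim that in $H_1$ all Dehn twists about nonseparating simple closed curves are identified with a single class $t$: any two such curves are related by an element of $\pmcg(S_{1,n}^{b})$ via the change-of-coordinates principle (in genus one there is always room to realize the required homeomorphism while fixing the punctures and the boundary), so the twists are conjugate and hence equal after abelianizing. The braid relations then contribute nothing. The remaining generators are the boundary twists $z_1,\dots,z_b$, which are central and pairwise nonconjugate, together with twists about separating curves; the latter I would eliminate using the (possibly degenerate) star relations, expressing each in terms of $t$ and the $z_i$.

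The heart of the argument is the single surviving numerical relation produced by the star relation. Abelianizing $(T_{c_{1}}T_{c_{2}}T_{c_{3}}T_{b})^{3}=T_{d_{1}}T_{d_{2}}T_{d_{3}}$ with $c_1,c_2,c_3,b$ nonseparating yields $12\,t=\bar d_{1}+\bar d_{2}+\bar d_{3}$, where the $d_i$ bound the embedded $S_1^{3}$. Capping off, via the degenerate star relations, those $d_i$ that bound disks or punctured disks and assembling the configuration over all of $S_{1,n}^{b}$ collapses this to the relation $12\,t=z_{1}+\cdots+z_{b}$. When $b=0$ every $d_i$ is null-homotopic, so the relation reads $12\,t=0$ and $H_1\cong\ZZ/12\ZZ$, consistent with the closed case $\mcg(S_{1,0})\cong SL_2(\ZZ)$. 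When $b>0$ the relation lets me solve for one boundary twist, say $z_{b}=12\,t-(z_{1}+\cdots+z_{b-1})$, leaving $t,z_{1},\dots,z_{b-1}$ as a free basis and giving $H_1\cong\ZZ^{b}$ with exactly the asserted basis.

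I expect the main obstacle to be twofold. First, one must verify that no relations beyond this one survive, i.e.\ that all abelianized star relations among the separating-curve twists are consequences of $12\,t=\sum_i z_i$; this is where having the \emph{complete} presentation rather than just a generating set is essential, and where assembling the exact relation for general $b$ requires genuinely careful bookkeeping. Second, one must check that the punctures contribute nothing: the point-pushing subgroup is generated by elements of the form $T_{a}T_{b}^{-1}$ with $a,b$ nonseparating, which die in $H_1$ since $\bar a=\bar b=t$, so $n$ does not enter the answer. Confirming that the star presentation descends correctly to the pure subgroup, and that the coefficient is exactly $12$ and not a proper divisor, are the points I would treat most carefully.
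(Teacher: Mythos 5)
The first thing to calibrate against: the paper does not prove Theorem \ref{fin-homology} at all. It is imported from Korkmaz's survey \cite{korkmaz02}, and the only argument the paper supplies is the remark following the statement (all nonseparating twists are conjugate in $\pmcg$, hence equal in homology, and for $b=0$ the relation $12\tau=0$ falls out of a fully degenerate star relation). So there is no in-paper proof to compare yours against; your proposal has to be judged as a reconstruction of the cited computation. Judged that way, your route is the natural one, and it is exactly the computation the paper itself leans on downstream: your identification of all nonseparating twists with a single class $t$ is the paper's post-theorem remark, and your assembled relation $12t = z_1 + \cdots + z_b$ is precisely the paper's Lemma \ref{boundaryrelation}, which is proved there by the same induction on (degenerate) star relations that you sketch. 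Your point-pushing observation is also the right explanation for why $n$ never enters the answer.

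Two caveats, one cosmetic and one substantive. Cosmetic: your parenthetical claim that ``the lantern relation is itself a degenerate star relation'' is false. Degenerating a star relation only caps off boundary components of the embedded $S_1^3$, so the resulting relation still lives on a genus one subsurface, whereas the lantern relation lives on a four-holed sphere; the lantern is a \emph{consequence} of Gervais' relations, not an instance of them. You never use this, so nothing breaks. Substantive: what you have actually written down establishes that $t, z_1, \ldots, z_{b-1}$ generate $H_1(\pmcg(S_{1,n}^b);\ZZ)$ and satisfy $12t = \sum_i z_i$, i.e.\ an upper bound on the group. The content of the theorem is the other direction: that no further relations survive abelianization, so the rank is exactly $b$ when $b>0$ and the torsion is exactly $\ZZ/12\ZZ$ (not a proper quotient) when $b=0$. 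You flag this yourself as ``careful bookkeeping,'' but it is the whole theorem, and the proposal stops short of it. For $b=0$ the pieces you mention do close the gap cleanly ($\pmcg(S_{1,1})\cong SL(2,\integers)$ as the base case, then the Birman exact sequence plus your observation that point-pushing dies in $H_1$); for $b>0$ you would need either the full linear-algebra pass through Gervais' complete relation list \cite{gervais01} -- finite and mechanical, so the plan is viable -- or, more cleanly, $b$ independent homomorphisms to $\ZZ$ certifying that $t, z_1,\ldots,z_{b-1}$ are not subject to any hidden relation.
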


Note that by an application of the change of coordinates principle all Dehn twists about nonseparating curves are identified in homology. When the surface does not have boundary the homology can be derived directly from a degenerate star relation.  

\subsection{Homology of exhaustions}

Let $S$ be any orientable infinite type genus $1$ surface and let $F_{0} 
\subset F_{1} \subset F_{2} \subset \cdots$ be an exhaustion of $S$ by finite type subsurfaces with $F_{0}$ having genus one. Without loss of generality we may assume that $F_{n+1}$ is obtained from $F_{n}$ by gluing a disk with a puncture, an annulus with a puncture, or a pair of pants to some boundary curve of $F_{n}$. Let $P_{n} = \pmcg(F_{n})$. 

Note that $\cpmcg(S)= \dirlim P_{n}$, so any homomorphism 
\[
f\colon\cpmcg(S) \to \ZZ
\] 
restricts to a map $f_n\colon P_{n} \to \ZZ$ for all $n$. Conversely, we say that a sequence of maps $\{f_{n}:P_{n} \rightarrow \ZZ\}$ is \textbf{consistent} if $f_{n+1}\vert_{P_{n}} = f_{n}$ for all $n$. If we define consistent maps $f_n$ for each $n$, these determine a map $f$ in the limit.

A basis for $H_1(P_n;\ZZ)$ is given by $\langle \tau, \partial_1, \dots, \partial_{m-1} \rangle$ where $\tau$ represents a Dehn twist about a non-separating curve and $\partial_i$ represents a Dehn twist about a boundary curve $b_i$, where all the boundary curves are $b_1, \dots ,b_m$. We now describe how the exhaustion is realized in homology. 

\begin{lemma}\label{homology-relationship}
The map $H_{1}(P_{n};\ZZ) \rightarrow H_{1}(P_{n+1};\ZZ)$ takes one of the following three forms depending on how $F_{n+1}$ is obtained from $F_{n}$:
\begin{enumerate}
    \item (Disk with a Puncture);
    If we cap a boundary curve $b_{i}$ with a punctured disk then $\partial_{i}$ is killed and our new basis for $H_{1}(P_{n+1};\ZZ)$ is given by $\langle \tau, \partial_1, \dots, \partial_{i-1}, \partial_{i+1} \dots, \partial_{m-1} \rangle$.
    
    \item (Annulus with a Puncture):
    If we glue an annulus with a puncture to $b_{i}$ then we get a new boundary component $b_{i}'$ and $\partial_{i} = \partial_{i}'$; i.e. the two Dehn twists will be homologous. A new basis for $H_{1}(P_{n+1};\ZZ)$ is $\langle \tau,\partial_{1},\ldots,\partial_{i-1},\partial_{i}',\partial_{i+1},\ldots,\partial_{m-1}\rangle$.
    
    \item (Pair of Pants):
    If we glue a pair of pants to $b_{i}$ then we get two new boundary components, say $b_{i_{0}}$ and $b_{i_{1}}$, and $\partial_{i} = \partial_{i_{0}} + \partial_{i_{1}}$. The new basis for $H_{1}(P_{n+1};\ZZ)$ is $\langle \tau, \partial_1, \dots \partial_{i-1}, \partial_{i_{0}},\partial_{i_{1}}, \partial_{i+1}, \dots, \partial_{m-1} \rangle$
\end{enumerate}
\end{lemma}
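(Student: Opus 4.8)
The plan is to compute the inclusion-induced map $\phi\colon H_1(P_n;\ZZ) \to H_1(P_{n+1};\ZZ)$ directly on the Dehn-twist generators supplied by Theorem \ref{fin-homology}. Since the inclusion $F_n \hookrightarrow F_{n+1}$ sends the twist about a curve $\gamma \subset F_n$ to the twist about that same curve viewed inside $F_{n+1}$, the map $\phi$ is pinned down once we identify the images of $\tau$ and of each boundary twist $\partial_j$. Two of these are immediate. A nonseparating curve of $F_n$ stays nonseparating in $F_{n+1}$, so by the change of coordinates principle (all nonseparating twists are homologous, as noted after Theorem \ref{fin-homology}) we get $\phi(\tau) = \tau$; and any boundary curve $b_j$ of $F_n$ untouched by the gluing is still a boundary curve of $F_{n+1}$, so $\phi(\partial_j) = \partial_j$. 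The only generator whose image is not obvious is $\partial_i$, the twist about the boundary curve $b_i$ along which we glue, because $b_i$ becomes an interior separating curve of $F_{n+1}$.

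The key tool for $\partial_i$ is the homological consequence of the star relation of Figure \figref{star}. Abelianizing $(T_{c_1}T_{c_2}T_{c_3}T_b)^3 = T_{d_1}T_{d_2}T_{d_3}$ and using that each of $c_1,c_2,c_3,b$ is nonseparating, hence equal to $\tau$ in homology, yields $\partial_{d_1} + \partial_{d_2} + \partial_{d_3} = 12\tau$. More generally I will record that in every $H_1(P_k;\ZZ)$ the boundary twists satisfy
\[
\sum_{b_j \subset \partial F_k} \partial_j = 12\tau,
\]
with the degenerate versions of the star relation accounting for boundary curves that bound disks. This relation is precisely the single dependency that collapses Korkmaz's generating set to the stated basis of rank equal to the number of boundary components.

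With this in hand all three cases follow uniformly. In $H_1(P_n;\ZZ)$ we solve the relation for the modified twist, $\partial_i = 12\tau - \sum_{j\neq i}\partial_j$, and apply $\phi$ to get $\phi(\partial_i) = 12\tau - \sum_{j\neq i}\partial_j$ in $H_1(P_{n+1};\ZZ)$, using $\phi(\tau)=\tau$ and $\phi(\partial_j)=\partial_j$. I then simplify the right-hand side using the relation satisfied by the boundary curves of $F_{n+1}$: if $b_i$ is capped by a punctured disk, the sum over the surviving boundaries already equals $12\tau$, so $\phi(\partial_i)=0$; if $b_i$ is replaced by a single new boundary $b_i'$, then $12\tau - \sum_{j\neq i}\partial_j = \partial_i'$; and if $b_i$ is replaced by the two new boundaries $b_{i_0},b_{i_1}$ of the glued pair of pants, then $12\tau - \sum_{j\neq i}\partial_j = \partial_{i_0}+\partial_{i_1}$. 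These are the three asserted formulas, and reading off the surviving generators gives the stated bases. As an independent check on Case (1), the capping curve $b_i$ bounds a once-punctured disk in $F_{n+1}$, and a twist about such a curve is trivial in the mapping class group, so $\phi(\partial_i)=0$ directly.

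The main obstacle is justifying the displayed relation $\sum \partial_j = 12\tau$ in full generality rather than only in the three-holed torus where the star relation literally lives. I expect to argue that the degenerate star relations together with Korkmaz's freeness and rank count (Theorem \ref{fin-homology}) pin this down in $H_1(P_k;\ZZ)$ for every number of boundary components, and that it is the \emph{only} relation, so that $\phi$ is completely determined by its values on $\tau$ and the $\partial_j$. The remaining inputs—that nonseparating curves remain nonseparating and lie in a single pure orbit, and that a twist about a curve bounding a once-punctured disk is trivial—are routine.
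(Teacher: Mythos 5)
Your overall strategy---pin down the inclusion-induced map on the generators $\tau,\partial_{1},\dots,\partial_{m}$, then compute the image of $\partial_{i}$ by solving a relation---is coherent, and your case-by-case algebra is correct \emph{granting} the relation $\sum_{j}\partial_{j}=12\tau$ in $H_{1}(\pmcg(F);\ZZ)$ for an arbitrary finite-type genus-one surface $F$. But that relation is exactly where your proof has a genuine gap, and you flag it yourself without closing it. Your proposed justification, that ``the degenerate star relations together with Korkmaz's freeness and rank count pin this down,'' cannot work as stated: Theorem \ref{fin-homology} tells you that $\{\tau,\partial_{1},\dots,\partial_{m-1}\}$ is a basis, hence that $\partial_{m}$ equals \emph{some} integral combination of these classes, but no rank count can tell you \emph{which} combination; the coefficients must be exhibited by actual relations in the group. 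A single star relation produces the desired identity only when the ambient surface has at most three boundary-components-or-punctures in total, so for larger surfaces one must induct, and the inductive step needs to know that the twist about a curve isolating two boundary components is homologous to the sum of the two boundary twists---which is precisely case (3) of the lemma you are trying to prove. Indeed, in the paper this global relation is Lemma \ref{boundaryrelation}, which appears \emph{after} the present lemma and is proved \emph{using} it; citing it here would be circular, and proving it independently lands you back at the same geometric argument you were trying to avoid.

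The paper sidesteps the global relation entirely by arguing locally. In cases (2) and (3) it chooses a curve $a$ in $F_{n+1}$ that cuts off the genus together with the glued-on piece, and compares two (possibly degenerate) star relations sharing $a$: the subsurface bounded by $a$ and $b_{i}$ gives $12\tau=\alpha+\partial_{i}$, while the subsurface bounded by $a$ and the new boundary gives $12\tau=\alpha+\partial_{i}'$ in case (2), respectively $12\tau=\alpha+\partial_{i_{0}}+\partial_{i_{1}}$ in case (3); subtracting yields the claims. Your case (1), via the observation that a curve bounding a once-punctured disk has trivial twist, is correct and matches the paper. To repair your write-up, either adopt this local two-star comparison for cases (2) and (3), or set up a simultaneous induction establishing the lemma and the relation $\sum_{j}\partial_{j}=12\tau$ together; as written, the keystone of your argument is assumed rather than proved.
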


\begin{proof}
    See Figure \figref{homexample} for examples of the annulus with a puncture and pair of pants cases.
    \begin{enumerate}
        \item 
        Suppose $F_{n+1}$ is obtained from $F_{n}$ by capping the boundary curve $b_{i}$ with a punctured disk. Then $b_{i}$ becomes null homotopic in $F_{n+1}$ so $\partial_{i}$ is now trivial. Furthermore, the rank of $H_{1}(P_{n+1};\ZZ)$ is decreased by one and $\langle \tau, \partial_1, \dots, \partial_{i-1}, \partial_{i+1} \dots \partial_{m-1} \rangle$ is a basis.
        \item
        Suppose $F_{n+1}$ is obtained from $F_{n}$ by gluing an annulus with a puncture to $b_{i}$. Let $b_{i}'$ be the new boundary curve and $\partial_{i}'$ the Dehn twist about it. Now we will apply a star relation to see that $\partial_{i} = \partial'_{i}$. Let $a$ be the curve on $F_{n+1}$ that bounds all boundary components and punctures other than $b_{i}'$ and the puncture we glued on. Then we have two degenerate star relations $12\tau = \alpha + \partial_{i}'$ and $12\tau = \alpha + \partial_{i}$ where $\alpha$ is the Dehn twist about $a$. Thus we see that $\partial_{i}' = \partial_{i}$ as desired. 
        \item
        Suppose $F_{n+1}$ is obtained from $F_{n}$ by gluing a pair of pants to $b_{i}$. Let $b_{i_{0}}$ and $b_{i_{1}}$ be the two new boundary curves and $\partial_{i_{0}}$ and $\partial_{i_{1}}$ the Dehn twists about them respectively. We proceed as in the previous case. Let $a$ be the curve on $F_{n+1}$ that bounds all punctures and boundary curves other than $b_{i_{0}}$ and $b_{i_{1}}$. Then we have one degenerate star relation $12\tau = \alpha + \partial_{i}$ and a star relation $12\tau = \alpha + \partial_{i_{0}} + \partial_{i_{1}}$. Thus we see that $\partial_{i} = \partial_{i_{0}} + \partial_{i_{1}}$. \qedhere
    \end{enumerate}    
    	\begin{figure}
	    \centering
	    \def\svgwidth{.8\columnwidth}
		    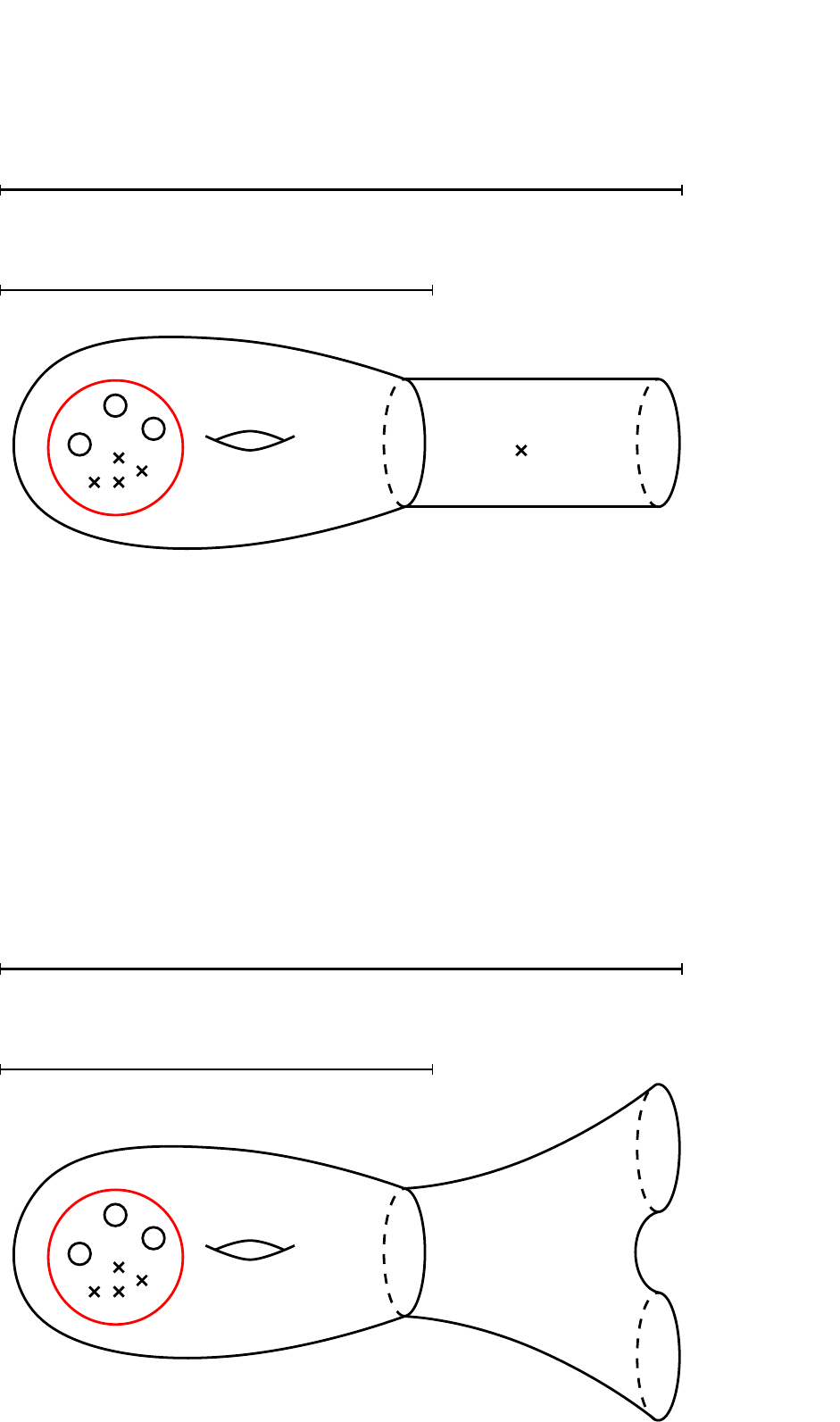
		    \caption{Example of the punctured annulus and pair of pants cases. The x's represent punctures.}
	    \label{fig:homexample}
	    \end{figure}
	    \end{proof}

We also need the following relation in the homology of these finite type surfaces. 

\begin{lemma}\label{boundaryrelation}
    If $F$ is a genus one surface of finite type with boundary curves $b_{1},\ldots,b_{m}$ then in $H_{1}(\pmcg(F);\ZZ)$ the relation $12\tau = \sum_{i=1}^{m} \partial_{i}$ holds, where $\partial_{i}$ is the Dehn twist about $b_{i}$.
\end{lemma}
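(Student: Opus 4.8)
The goal is to prove that $12\tau = \sum_{i=1}^{m}\partial_i$ in $H_1(\pmcg(F);\ZZ)$ for a genus one finite type surface $F$ with boundary curves $b_1,\dots,b_m$. My plan is to obtain this relation as a single (possibly degenerate) star relation, abelianized. The star relation $(T_{c_1}T_{c_2}T_{c_3}T_b)^3 = T_{d_1}T_{d_2}T_{d_3}$ lives on a copy of $S_1^3$; upon passing to homology, the left-hand side becomes $3(\tau+\tau+\tau+\tau) = 12\tau$, since the four curves $c_1,c_2,c_3,b$ are all nonseparating and hence homologous in $H_1(\pmcg)$ by the change of coordinates principle (as noted after Theorem \ref{fin-homology}). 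So the abelianized star relation reads $12\tau = \partial_{d_1}+\partial_{d_2}+\partial_{d_3}$ in full generality.

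The key step is therefore to realize $F$ as an $S_1^3$ with pairs of pants (and punctured disks/annuli) glued on, arranged so that the three curves $d_1,d_2,d_3$ can be successively degenerated until the three outermost separating curves become exactly the $m$ boundary curves $b_1,\dots,b_m$ of $F$. Concretely, I would start from the genus-one piece carrying the nonseparating twist $\tau$, and then build out the rest of $F$ by the same three moves catalogued in Lemma \ref{homology-relationship}: capping with punctured disks kills a boundary twist (but these do not appear on the right-hand side of our target relation, so they must be absorbed before the final count), gluing punctured annuli leaves a twist unchanged, and gluing pairs of pants splits one boundary twist into a sum of two. Tracking a single separating curve $a$ that bounds all but the genus, the degenerate star relation $12\tau = \alpha + (\text{sum of other boundary twists})$ gets refined at each step by exactly the substitutions of Lemma \ref{homology-relationship}.

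The cleanest route is probably an induction on $m$ using Lemma \ref{homology-relationship} directly, rather than writing out one giant star relation. The base case is $m=1$: here $F$ is a genus one surface with a single boundary curve (and punctures), and the degenerate star relation with all three $d_i$ collapsed to the one boundary curve $b_1$ gives $12\tau = \partial_1$ — this is essentially the boundary relation on $S_1^1$ and matches Theorem \ref{fin-homology}. For the inductive step, if $F$ has $m$ boundary curves, I would view $F$ as obtained from a surface $F'$ with $m-1$ boundary curves by gluing a pair of pants to one boundary curve $b_i$ of $F'$; by Lemma \ref{homology-relationship}(3) the twist $\partial_i$ of $F'$ equals $\partial_{i_0}+\partial_{i_1}$ in $H_1(\pmcg(F))$, so the inductive relation $12\tau = \sum \partial_j$ for $F'$ upgrades to the relation for $F$ after substituting $\partial_i = \partial_{i_0}+\partial_{i_1}$. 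The punctured-annulus and punctured-disk moves only change the number of punctures, not the boundary count, so they do not affect the relation and can be handled trivially (or assumed away by reducing to the minimal-puncture case, since adding punctures does not alter the homology classes involved).

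The main obstacle I anticipate is the bookkeeping in the inductive step: I must ensure that the separating curve $a$ used to set up the degenerate star relation can always be chosen to enclose precisely the genus-carrying piece together with whatever has been glued, so that the three star-relation boundary curves $d_1,d_2,d_3$ correctly track the partition of boundary components being built. The geometry is flexible enough to always arrange this (any finite-type genus one surface admits such an exhaustion by the three moves, as assumed at the start of this subsection), but the argument should explicitly invoke the change-of-coordinates principle to justify that the resulting twist about a nonseparating curve is always the same class $\tau$, independent of which star relation's interior curves one uses.
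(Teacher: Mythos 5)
Your argument is correct and is essentially the paper's own proof: both induct on the number of boundary curves, using (doubly) degenerate star relations for the base case and the pair-of-pants relation $\partial_i = \partial_{i_0} + \partial_{i_1}$ of Lemma \ref{homology-relationship}(3) for the inductive step (the paper cuts off two boundary curves of $F$ with a separating curve and applies the inductive hypothesis to the complementary subsurface, which is exactly your gluing move read backwards). Two harmless slips: the paper also records the boundaryless case $m=0$, where a fully degenerate star relation gives $12\tau = 0$, which your induction starting at $m=1$ omits; and your remark that the punctured-disk move does not change the boundary count is false (capping $b_i$ removes a boundary curve), though this is immaterial since pairs of pants and punctured annuli alone suffice to build any finite-type genus one surface with $m\geq 1$ boundary curves from $S_{1}^{1}$.
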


\begin{proof}
    We first note that if $m=0$ we get $12\tau = 0$ from a fully degenerate star relation. Next we induct on the number of boundary curves. If $m=1$ this is an application of a doubly degenerate star relation. The case that $m=2$ is the same with a degenerate star relation. 
    
    Suppose $F$ has $m>2$ boundary curves, say $b_{1}\ldots,b_{m}$. Let $b'$ be the separating curve that isolates $b_{m}$ and $b_{m-1}$ from the rest of the surface. Now by the induction step $12\tau = \partial' + \sum_{i=1}^{m-2}\partial_{i}$ where $\partial'$ is the Dehn twist about $b'$. By the above lemma, $\partial' = \partial_{m-1}+\partial_{m}$, proving the desired result. 
    \end{proof}

\subsection{Proof of theorem \ref{genus1}}

\begin{proof}[Proof of theorem \ref{genus1}]

Given an exhaustion of $S$ by finite type surfaces, $F_{0}\subset F_{1} \subset F_{2} \subset \cdots$, as above, let $P_{n} = \pmcg(F_{n})$. We then obtain a directed system of groups and have $\varinjlim P_{n} = \cpmcg(S)$. Let $f\colon \pmcg(S) \rightarrow \ZZ$ be a homomorphism. Since $\pmcg(S)$ is a Polish group by Theorem \ref{closedsubgroupspolish}, $f$ is continuous, by Theorem \ref{automaticcontinuity}. We obtain a continuous map $f_{c}\colon \cpmcg(S) \rightarrow \ZZ$, by restriction, and hence maps $f_{n}\colon P_{n} \rightarrow \ZZ$ for each $n$. 

We claim that $f_{n} = 0$ for all $n$. Note that then $f_{c} = 0$ and since $\pmcg(S) = \overline{\cpmcg(S)}$, by Theorem \ref{closureofcompact}, we will also have $f=0$ and the proof would be complete. 

Suppose, to the contrary, that some $f_{n} \neq 0$. Since a basis for $H_{1}(P_{n},\ZZ)$ is $\la \tau, \partial_{1}, \ldots, \partial_{m-1} \ra$ as above, we must have that $f_{n}$ is nonzero on some basis element. In fact, we also have that $f_{n}$ is nonzero on at least one Dehn twist about some boundary curve. Indeed, if $f_{n}(\partial_{i}) = 0$ for $i = 1,\ldots,m-1$ and $f_{n}(\tau) \neq 0$, then using the relation that $12\tau = \sum_{i=1}^{m} \partial_{i}$ we must have that $f_{n}(\partial_{m}) \neq 0$. Let $\hat{\partial}_{n}$ be a Dehn twist about a boundary curve, say $\hat b_{n}$, such that $f_{n}(\hat{\partial_{n}}) > 0$. We can assume it is positive by possibly taking an inverse Dehn twist. 

Next we examine $f_{k}\colon P_{k} \rightarrow \ZZ$ where $k > n$ is the next time that an element of our exhaustion is obtained by gluing a punctured disk, a punctured annulus, or a pair of pants to the boundary curve $\hat{b}_{n}$. Note that we cannot have that $F_{k}$ is obtained by gluing a punctured disk to $\hat{b}_{n}$ since then $\hat\partial_{n}$ would become trivial and $f_{k}(\hat\partial_{n})=0$, contradicting the compatibility of the $f_{n}$ with our directed system.

Thus we have that $F_{k}$ is obtained by either gluing a punctured annulus or a pair of pants to $\hat b_{n}$. Applying Lemma \ref{homology-relationship} we then find a new Dehn twist about a boundary curve $\hat\partial_{k}$ such that $f_{k}(\hat{\partial}_{k})>0$. Note that we still have $f_{k}(\hat{\partial}_{n}) > 0$ as well. 

We continue this process and obtain a sequence of Dehn twists $\hat{\partial}_{n_{i}}$ with the property that $f_{k}(\hat{\partial}_{n_{i}}) > 0$ for all $n_{i} < k$. Since the maps $f_{k}$ are a compatible sequence of maps and are compatible with $f$, there exists an infinite sequence $\{\hat{\partial}_{n_{i}}\}$ leaving every compact set such that $f(\hat{\partial}_{n_{i}}) \neq 0$, contradicting Lemma \ref{compactlemma}.
\end{proof}

Note that $\cpmcg(S)$ will have nontrivial cohomology, but it just does not extend (continuously) to $\pmcg(S)$.

\begin{remark}
In this setting, $\pmcg(S)$ does have a finite index subgroup which surjects onto $F_2$, the free group on two generators. This is because we have a surjective group homomorphism, coming from a forgetful map, to $\pmcg(S_{1,1})$. It is a classical result that $\pmcg(S_{1,1})$ is virtually free, as it is isomorphic to $SL(2,\integers)$. Taking the preimage of that finite index free subgroup we get the desired finite index subgroup.
\end{remark}

\section{Genus zero}

Let $S$ be any surface, let $X \subset S$ be a compact and totally disconnected collection of points in the interior of $S$, and let $S_{X}$ denote the surface obtained from $S$ by marking all the points in $X$. Now there is a natural homomorphism, called the \textbf{forgetful map}, $\mathcal{F}:\pmcg(S_{X}) \rightarrow \pmcg(S)$ realized by forgetting that the points in $X$ are marked. The goal of this section is to show that there are homomorphisms to $\ZZ$ from the pure mapping class group in the genus zero case which \emph{do not} factor through forgetful maps to finite type surfaces. 

Let $\flute$ be a sphere with infinitely many isolated punctures with one accumulation point. We will now build a cohomology class on $\pmcg(\flute)$ which ``sees" every puncture and thus cannot factor through a forgetful map. We first build a homomorphism from $\cpmcg(\flute)$ to $\ZZ$ and then  show that it extends to the closure and hence the entire pure mapping class group. Let $K_{n}$ be a finite type exhaustion of $\flute$ where $K_{0}$ is a disk with two punctures and then $K_{n+1}$ is obtained from $K_{n}$ by gluing an annulus with one puncture to the boundary of $K_{n}$. 

We have that the pure mapping class group of each surface $K_{n}$ is the pure braid group $PB_{n+2}$. $H^{1}(PB_{n};\ZZ)$ has a basis coming from Dehn twists about each pair of punctures. The Dehn twist about the boundary curve can be written as a product of all these Dehn twists about pairs of punctures, without inverses \cite{FM11}. Let $a$ and $b$ be the two punctures in $K_{0}$ and enumerate the other punctures of $K_{n}$ by $1,\ldots, n$. Then the curve about a pair of punctures is denoted $\gamma_{ij}$ where $i,j>0$ or $= a,b$ and the twist about such a curve is $T_{\gamma_{ij}}$. See Figure \figref{flute} for an example of these curves. We define $\phi_{0}\colon\pmcg(K_{0}) \rightarrow \ZZ$ to be the zero map; i.e., $\phi$ sends the only basis element, $T_{\gamma_{ab}}$, to $0$. 

Next $\phi_{1}\colon \pmcg(K_{1}) \rightarrow \ZZ$ is defined on the basis by sending
\begin{align*}
    T_{\gamma_{ab}} &\mapsto 0 \\
    T_{\gamma_{a1}} &\mapsto 1 \\
    T_{\gamma_{b1}} &\mapsto -1.
\end{align*}
Note that $\phi_{1}\vert_{\pmcg(K_{0})} = \phi_{0}$. We now define $\phi_{n+1}$ from $\phi_{n}$ recursively by setting $\phi_{n+1}\vert_{\pmcg(K_{n})} = \phi_{n}$ and on the new basis elements
\begin{align*}
    T_{\gamma_{i(n+1)}} & \mapsto 0 \;\;\; \text{ if } i\neq a,b \\
    T_{\gamma_{a(n+1)}} &\mapsto 1 \\
    T_{\gamma_{b(n+1)}} &\mapsto -1.
\end{align*}

\begin{figure}
	    \centering
	    \def\svgwidth{\columnwidth}
		    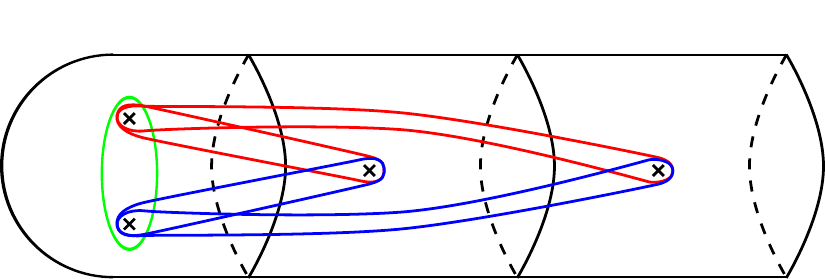
		    \caption{Curves $\gamma_{ab}$ (in green) and $\gamma_{ai}$ and $\gamma_{bi}$ (in red and blue) on the flute surface.}
	    \label{fig:flute}
\end{figure}

Now since the $\phi_{n}$ are compatible we see that in the direct limit we get a map $\phi\colon\cpmcg(\flute) \rightarrow \ZZ$. We claim that $\phi$ extends to a map on $\overline{\cpmcg}(\flute) = \pmcg(\flute)$. We first note that $\phi$ will send any Dehn twist about a curve which does not intersect $K_{0}$ to $0$. Indeed, given any curve $\gamma$ in some $K_{n}$ which does not intersect $K_{0}$ we have two possibilities, either $\gamma$ does not separate $K_{0}$ and $\partial K_{n}$ or it does. Consider the first case. Then $\gamma$ is the boundary of two surfaces, one containing $K_{0}$ and the other, $K'$, not containing $K_{0}$. Thus, $T_{\gamma}$ is the product of Dehn twists about curves contained in $K'$. However, $\phi$ is zero on all of these twists since they live completely outside of $K_{0}$, so $\phi(T_{\gamma})=0$.

Next suppose that $\gamma$ separates $K_{0}$ and $\partial K_{n}$. Then $T_{\gamma}$ can be written in homology as the sum of all curves about pairs of punctures in the component of $K_{n} \setminus \gamma$ which contains $K_{0}$. The only nonzero terms in this sum come in pairs $T_{\gamma_{ai}}$ and $T_{\gamma_{bi}}$ and $\phi(T_{\gamma_{ai}}  T_{\gamma_{bi}}) = 0$. Thus we see that $\phi(T_{\gamma}) = 0$. 

To see that $\phi$ actually extends to the closure of compactly supported pure mapping classes we use the following lemma which is a consequence of the proof of proposition 6.2 in \cite{patelvlamis}.

\begin{lemma}[\cite{patelvlamis}]
    When $S$ is a surface with at most one end accumulated by genus any $f \in \overline{\cpmcg}(S) \setminus \cpmcg(S)$ can be realized as an infinite product of Dehn twists. 
\end{lemma}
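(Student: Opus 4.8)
The plan is to use the very definition of the closure together with a telescoping argument. Fix an exhaustion $K_{0} \subset K_{1} \subset \cdots$ of $S$ by finite type subsurfaces, so that $\cpmcg(S) = \dirlim \pmcg(K_{n})$. Since $f \in \overline{\cpmcg}(S)$, I would first use the description of the compact-open topology on $\mcg(S)$---where a basic neighborhood of the identity consists of classes having a representative equal to the identity on a prescribed compact set---to extract a sequence of compactly supported classes $f_{n} \in \cpmcg(S)$ with $f_{n} \to f$ and, after passing to a subsequence, with $f_{n}$ agreeing with $f$ on $K_{n}$ up to isotopy.

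Next I would form the successive differences $g_{n} = f_{n-1}^{-1} f_{n}$ (with $g_{0} = f_{0}$), so that the partial products telescope: $f_{0} g_{1} \cdots g_{n} = f_{n}$. Because both $f_{n-1}$ and $f_{n}$ agree with $f$ on $K_{n-1}$, the class $g_{n}$ restricts to the identity on $K_{n-1}$ up to isotopy; being a product of two compactly supported classes it is itself compactly supported. The key intermediate step is then to promote these two facts to an honest representative of $g_{n}$ supported in a compact---hence finite type---subsurface $R_{n} \subset S \setminus \operatorname{int}(K_{n-1})$. Since $R_{n}$ has finite type, its mapping class group is generated by Dehn twists, so $g_{n}$ is a finite product $T_{\beta_{n,1}} \cdots T_{\beta_{n,k_{n}}}$ with every $\beta_{n,j}$ disjoint from $K_{n-1}$; likewise the compactly supported $g_{0} = f_{0}$ is itself a finite product of Dehn twists.

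Finally I would concatenate these finite blocks into a single sequence of Dehn twists $T_{\alpha_{1}}, T_{\alpha_{2}}, \ldots$ (first the twists realizing $g_{0}$, then those of $g_{1}$, and so on). Given any compact $K$, we have $K \subset K_{m}$ for some $m$, and for every $n > m$ all the curves $\beta_{n,j}$ lie outside $K_{m} \supset K$; hence only finitely many $\alpha_{i}$ meet $K$, i.e. the $\alpha_{i}$ leave every compact set and the infinite product $\prod_{i} T_{\alpha_{i}}$ converges in $\pmcg(S)$. Its partial products are exactly the $f_{n}$, which converge to $f$, so $f = \prod_{i} T_{\alpha_{i}}$, as desired. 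The hypothesis that $S$ has at most one end accumulated by genus is what guarantees, via Theorem \ref{closureofcompact} and the proof of Patel--Vlamis, that this approximation scheme is available and that the complementary regions stay controlled.

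The main obstacle is the middle step: passing from the two soft facts---that $g_{n}$ is compactly supported and that $g_{n}$ is isotopically trivial on $K_{n-1}$---to a single representative that is simultaneously the identity on $K_{n-1}$ and outside a compact set. This requires cutting along $\partial K_{n-1}$ and invoking the Alexander method (or a change of coordinates argument) to realize $g_{n}$ with support in a finite type subsurface of $S \setminus K_{n-1}$, and one must check that the isotopy trivializing $g_{n}$ on $K_{n-1}$ does not reintroduce support inside $K_{n-1}$. Everything else is bookkeeping about convergence in the compact-open topology.
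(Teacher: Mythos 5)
Your plan is essentially the right one, but note that the paper itself does not prove this lemma: it is imported from Patel--Vlamis as ``a consequence of the proof of Proposition 6.2'' there, so the real comparison is with that argument --- and your telescoping scheme is exactly it. The structure (approximate $f$ by compactly supported $f_{n}$ agreeing with $f$ on $K_{n}$ up to isotopy, pass to the differences $g_{n} = f_{n-1}^{-1}f_{n}$, realize each $g_{n}$ as a finite product of Dehn twists about curves missing $K_{n-1}$, and concatenate) is sound, and your convergence bookkeeping at the end is correct. The step you flag as the main obstacle is genuine but closes cleanly, and the right tool is not so much the Alexander method as compactness of isotopy tracks plus isotopy extension: since $K_{n-1}$ is compact, the ambient isotopy carrying $g_{n}|_{K_{n-1}}$ to the inclusion has compact image, so one may choose a finite type subsurface $C_{n}$ containing this track together with a compact support of $g_{n}$, with $K_{n-1} \subset \operatorname{int}(C_{n})$; the trivializing isotopy then takes place entirely inside $C_{n}$, and isotopy extension applied within $C_{n}$ (rel $\partial C_{n}$) produces a representative of $g_{n}$ that is the identity both on $K_{n-1}$ and outside $C_{n}$, hence supported in the finite type surface $C_{n} \setminus \operatorname{int}(K_{n-1})$. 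Since compactly supported classes are pure, this representative defines an element of the pure mapping class group of that surface rel boundary, which is generated by finitely many Dehn twists; this yields your $T_{\beta_{n,j}}$ (discarding any $\beta_{n,j}$ that become inessential in $S$). One small correction: the hypothesis of at most one end accumulated by genus plays no role in your argument --- the telescoping works for elements of $\overline{\cpmcg}(S)$ over any $S$. That hypothesis is only what makes the lemma useful in the paper: by Theorem \ref{closureofcompact} it guarantees $\overline{\cpmcg}(S) = \pmcg(S)$, so that every non-compactly-supported pure mapping class is covered by the statement.
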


Note also that any infinite product of Dehn twists can converge in the pure mapping class group only if the curves about which one twists eventually leave every compact set. To check that $\phi$ extends we realize any given $f \in \overline{\cpmcg}(\flute) \setminus \cpmcg(\flute)$ as an infinite product of Dehn twists about curves which eventually leave every compact set. In particular, they eventually have trivial intersection with $K_{0}$. Thus we see that $\phi$ is non-zero on only finitely many of these Dehn twists so that $\phi(f)$ is finite. Furthermore, if we realize $f$ as two different infinite products then these infinite products must eventually agree on every compact set so that $\phi(f)$ is well-defined and extends. Note that $\phi$ cannot factor through any forgetful map to a finite type surface since forgetting any puncture would make some $\gamma_{ai}$ trivial and $\phi(\gamma_{ai}) \neq 0$ for all $i$. This gives us the following proposition.

\begin{proposition}
    Let $S$ be a surface of genus $0$ with infinitely many punctures. Then there exists a homomorphism from $\pmcg(S)$ to $\ZZ$ that does not factor through a forgetful map to a sphere with finitely many punctures. 
\end{proposition}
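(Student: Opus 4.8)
The plan is to reduce to the flute surface, where the work above already produced the desired class, and then pull that class back. As noted in the introduction, since $S$ has genus zero and is of infinite type there is a forgetful map $\mathcal{F}\colon \pmcg(S) \to \pmcg(\flute)$, obtained by forgetting a compact, totally disconnected set $X$ of punctures so that the filled surface is homeomorphic to $\flute$; concretely one keeps two base punctures $a,b$ together with an escaping sequence $1,2,3,\dots$ of punctures accumulating to a single end. Let $\phi\colon \pmcg(\flute)\to\ZZ$ be the homomorphism constructed above, so that $\phi(T_{\gamma_{ai}})=1$ for every $i\geq 1$. I would then set $\psi := \phi\circ\mathcal{F}$, which is automatically a homomorphism $\pmcg(S)\to\ZZ$, hence a class in $H^1(\pmcg(S);\ZZ)$.

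The key point is to choose good detector curves in $S$. For each $i$ I would pick an embedded twice-punctured disk $D_{ai}\subset S$ whose two punctures are exactly $a$ and $i$ and which is disjoint from the forgotten set $X$. This is possible because $X$ is compact and totally disconnected, hence does not separate the surface, so $a$ and $i$ can be joined by a thin band missing $X$ and capped by small disks around the two (isolated) punctures. Writing $\tilde\gamma_{ai}=\partial D_{ai}$, filling in $X$ carries $\tilde\gamma_{ai}$ to the flute curve $\gamma_{ai}$, so $\mathcal{F}(T_{\tilde\gamma_{ai}})=T_{\gamma_{ai}}$ and therefore $\psi(T_{\tilde\gamma_{ai}})=\phi(T_{\gamma_{ai}})=1\neq 0$ for all $i$. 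Crucially, by construction $D_{ai}$ contains no punctures of $S$ other than $a$ and $i$.

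Finally I would rule out factoring. Suppose $\psi=\theta\circ\mathcal{G}$ for some forgetful map $\mathcal{G}\colon\pmcg(S)\to\pmcg(S')$ onto a sphere $S'$ with finitely many punctures, together with some $\theta\colon\pmcg(S')\to\ZZ$. Since $S'$ has only finitely many punctures, $\mathcal{G}$ forgets all but finitely many punctures of $S$; in particular it forgets the puncture $i$ for all large $i$. For such $i$ the disk $D_{ai}$ contains at most one puncture surviving in $S'$ (namely $a$, and only if $a$ is kept), so $\mathcal{G}(\tilde\gamma_{ai})$ bounds a disk with at most one puncture and $\mathcal{G}(T_{\tilde\gamma_{ai}})$ is trivial. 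Then $\psi(T_{\tilde\gamma_{ai}})=\theta(\mathcal{G}(T_{\tilde\gamma_{ai}}))=0$, contradicting $\psi(T_{\tilde\gamma_{ai}})=1$. Hence $\psi$ does not factor through any such forgetful map.

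The main obstacle I anticipate is the second step: arranging the detector curves to enclose exactly the two punctures $a$ and $i$ while avoiding the forgotten set $X$, so that the twists $T_{\tilde\gamma_{ai}}$ are simultaneously detected by $\psi$ (through $\mathcal{F}$) and trivialized by every finite-type forgetful map $\mathcal{G}$. Once the forgetful map $\mathcal{F}$ to the flute and this curve choice are in hand, the rest of the argument is formal.
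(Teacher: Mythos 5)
Your proposal is correct and follows essentially the same route as the paper: the paper also defines the class as $\phi\circ\mathcal{F}$ for a forgetful map $\mathcal{F}\colon\pmcg(S)\to\pmcg(\flute)$ and rules out factoring on the grounds that any finite-type forgetful map would trivialize some twist $T_{\gamma_{ai}}$ on which the class is nonzero, which is exactly your detector-curve argument carried out in more detail. The only cosmetic point is that the filled-in image of your curve $\tilde\gamma_{ai}$ need not be isotopic to the standard curve $\gamma_{ai}$; it does, however, enclose exactly the punctures $a$ and $i$, so by change of coordinates the two twists are conjugate in $\pmcg(\flute)$ and $\phi$ takes the same value on both, which is all your argument needs.
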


\begin{proof}
    Given any such $S$
    we have maps 
    \[\pmcg(S) \xrightarrow{\mathcal{F}} \pmcg(\flute) \xrightarrow{\phi} \ZZ
    \]
 where $\mathcal{F}$ is a forgetful map and $\phi$ is the homomorphism constructed above. Now $\phi \circ \mathcal{F}$ gives a homomorphism from $\pmcg(S)$ to $\ZZ$ which cannot factor through a forgetful map to a sphere with finitely many punctures. 
 \end{proof}

In our construction of $\phi$ it was only important that $\phi(T_{\gamma_{ai}} T_{\gamma_{bi}}) = 0$. Letting $\phi_{i}$ be defined the same way as $\phi$ except set $\phi_{i}(T_{ai})$ and $\phi_{i}(T_{bi})$ to be zero, we get an infinite family of maps $\{\phi_{i}\}$. In fact, for any subset, $A$, of positive integers with the complement of $A$ infinite we can define $\phi_A$ to be zero at $a_i,b_i$ when $i \in A$. This gives an uncountable collection of $A$ where $\phi_A$ will not factor through forgetful maps to finite type surfaces. Thus we have

\begin{corollary}
    If $S$ is a genus $0$ surface with infinitely many punctures then there is an uncountable family of homomorphisms to $\ZZ$ which do not factor through a forgetful map to a finite type surface.
\end{corollary}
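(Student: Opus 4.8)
The plan is to promote the single class $\phi$ to an entire family $\{\phi_A\}$ indexed by subsets $A$ of the positive integers whose complement $A^c$ is infinite, and then to verify that this family is uncountable, pairwise distinct, and immune to the factoring phenomenon. First I would define each $\phi_A$ exactly as $\phi$ was constructed over the exhaustion $\{K_n\}$: set $\phi_A(T_{\gamma_{jk}}) = 0$ whenever $\{j,k\} \cap \{a,b\} = \emptyset$, set $\phi_A(T_{\gamma_{ai}}) = \phi_A(T_{\gamma_{bi}}) = 0$ for every $i \in A$, and set $\phi_A(T_{\gamma_{ai}}) = 1$, $\phi_A(T_{\gamma_{bi}}) = -1$ for every $i \notin A$. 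The crucial observation is that the extension argument for $\phi$ used only the pairwise cancellation $\phi(T_{\gamma_{ai}} T_{\gamma_{bi}}) = 0$, and this persists for each $\phi_A$ and every $i$ (both twists vanish when $i \in A$, and they cancel when $i \notin A$). Hence the same argument shows that each $\phi_A$ kills every Dehn twist about a curve disjoint from $K_0$ and therefore extends continuously to $\overline{\cpmcg}(\flute) = \pmcg(\flute)$.

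Next I would establish non-factoring and distinctness, both of which are controlled by the hypothesis that $A^c$ is infinite. For non-factoring: any forgetful map to a finite-type sphere forgets all but finitely many punctures, and once puncture $i$ is forgotten the curve $\gamma_{ai}$ bounds a once-punctured disk, so $T_{\gamma_{ai}}$ becomes trivial; since $A^c$ is infinite, infinitely many indices $i \in A^c$ are forgotten by any such map, yet $\phi_A(T_{\gamma_{ai}}) = 1 \neq 0$, so $\phi_A$ cannot factor. For distinctness: if $A \neq A'$, choose $i$ in the symmetric difference, so the two homomorphisms disagree on $T_{\gamma_{ai}}$ and are thus distinct as homomorphisms, hence as elements of $H^1(\pmcg(\flute);\ZZ)$ under the identification of the first cohomology with the set of homomorphisms to $\ZZ$. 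Finally the indexing set is uncountable: every subset of the even positive integers has infinite complement, and there are continuum many such subsets, yielding the Cantor-set-indexed family promised in the introduction.

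The last step upgrades from $\flute$ to an arbitrary genus zero infinite-type surface $S$. Here I would invoke the forgetful map $\mathcal{F}\colon \pmcg(S) \to \pmcg(\flute)$ and pass to the pulled-back family $\{\phi_A \circ \mathcal{F}\}$, exactly as in the preceding Proposition. Since forgetful maps are surjective (equivalently, each $T_{\gamma_{ai}}$ lifts to the corresponding twist in $\pmcg(S)$), distinct $\phi_A$ remain distinct after precomposition with $\mathcal{F}$, and the Proposition's argument shows each $\phi_A \circ \mathcal{F}$ still fails to factor through a forgetful map to a finite-type surface.

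I expect the main obstacle to be organizational rather than conceptual: the genuinely new content is the recognition that the extension argument depends only on the pairwise-cancellation property, so that it survives the deformation from $\phi$ to $\phi_A$, together with the clean coupling of the infinite-complement hypothesis to the non-factoring conclusion. The one point that deserves care is confirming that the passage from $\flute$ to a general $S$ via $\mathcal{F}$ preserves both distinctness and non-factorability, which hinges on surjectivity of the forgetful map.
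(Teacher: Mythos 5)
Your proposal is correct and follows essentially the same route as the paper: vary the construction over subsets $A$ with infinite complement using the observation that the extension argument only needs the pairwise cancellation $\phi_A(T_{\gamma_{ai}}T_{\gamma_{bi}})=0$, then pull back to a general genus zero $S$ via the forgetful map to $\flute$. Your added details (distinctness via the symmetric difference, uncountably many admissible $A$ via subsets of the evens, and the remark that one only needs the twists $T_{\gamma_{ai}}$ to lift under $\mathcal{F}$ rather than full surjectivity) are all consistent with, and slightly more careful than, the paper's terse treatment.
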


These two results together give Theorem \ref{genus0}.

\end{document}